\documentclass[12pt]{article}
\usepackage[hidelinks]{hyperref} 
\usepackage[utf8]{inputenc}
\usepackage{hyperref}
\usepackage{amsthm}
\usepackage{amsmath}
\usepackage{enumitem}
\usepackage{amssymb}
\usepackage{tikz}
\usepackage{tikz-cd}
\usetikzlibrary{positioning}
\usepackage{graphicx}
\usepackage{float}
\usepackage{cleveref}
\usepackage{geometry}
\usepackage{thm-restate}

\usepackage{MnSymbol}
\geometry{a4paper, margin=1in}

\setlength {\marginparwidth }{2cm}
\usepackage{todonotes}

\usepackage[all]{xy}

\title{On the computability of optimal Scott sentences}
\author{Rachael Alvir, Barbara Csima\thanks{The second author was partially supported by an NSERC Discovery Grant under Grant No. \mbox{RGPIN-2024-04021}}, and Matthew Harrison-Trainor\thanks{The third author was supported by a Sloan Research Fellowship and by the National Science Foundation under Grant No.\ \mbox{DMS-2419591}.}}

\newcommand{\stage}{t}
\newcommand{\att}{k}
\newcommand{\extent}{n}

\newcommand{\mc}[1]{\mathcal{#1}}

\DeclareMathOperator{\Mod}{Mod}

\newtheorem{theorem}{Theorem}[section]
\newtheorem{lemma}[theorem]{Lemma}

\newtheorem{corollary}[theorem]{Corollary}
\newtheorem{proposition}[theorem]{Proposition}

\theoremstyle{definition}

\newtheorem{question}[theorem]{Question}

\theoremstyle{remark}

\begin{document}

\maketitle

\begin{abstract}
    Given a countable mathematical structure, its \emph{Scott sentence} is a sentence of the infinitary logic $\mc{L}_{\omega_1 \omega}$ that characterizes it among all countable structures. We can measure the complexity of a structure by the least complexity of a Scott sentence for that structure. It is known that there can be a difference between the least complexity of a Scott sentence and the least complexity of a computable Scott sentence; for example, Alvir, Knight, and McCoy showed that there is a computable structure with a $\Pi_2$ Scott sentence but no computable $\Pi_2$ Scott sentence. It is well known that a structure with a $\Pi_2$ Scott sentence must have a computable $\Pi_4$ Scott sentence. We show that this is best possible: there is a computable structure with a $\Pi_2$ Scott sentence but no computable $\Sigma_4$ Scott sentence. We also show that there is no reasonable characterization of the computable structures with a computable $\Pi_n$ Scott sentence by showing that the index set of such structures is $\Pi^1_1$-$m$-complete.
\end{abstract}

\section{Introduction}

Let $\mc{A}$ be a countable mathematical structure, such a countable graph, group, or ring. Suppose that we want to characterize $\mc{A}$ by writing down a sentence, or theory, which characterizes $\mc{A}$ up to isomorphism. If we work in elementary first-order logic then, as a consequence of compactness, we cannot do this for most countable structures $\mc{A}$. However, suppose we strengthen our logic to the logic $\mc{L}_{\omega_1 \omega}$ which allows countably infinite conjunctions and disjunctions. Scott \cite{Sco65} showed that for any countable structure $\mc{A}$ there is a sentence $\varphi$ of $\mc{L}_{\omega_1 \omega}$ that characterizes $\mc{A}$ up to isomorphism among countable structures, i.e., for all countable $\mc{B}$,
\[ \mc{B} \models \varphi \Longleftrightarrow \mc{A} \cong \mc{B}.\]
We call such a sentence a \emph{Scott sentence} for $\mc{A}$. This fact implies, for example, that while isomorphism is analytic-complete if we fix a particular $\mc{L}$-structure $\mc{A}$ then the set ${\{ \mc{B} \in \Mod(\mc{L}): \mc{A} \cong \mc{B}\}}$ is actually Borel.

The standard proof that every countable structure has a Scott sentence uses the back-and-forth relations. First, one shows that they must stabilize at some countable ordinal, and then from this one can extract a Scott sentence, called the \textit{canonical Scott sentence}. This Scott analysis of a structure has played an
important role in the study of Vaught’s conjecture, e.g. in Morley’s theorem \cite{Morley70} that
the number of non-isomorphic countable models of a theory is either at most $\aleph_1$ or is exactly $2^\aleph_0$. 

For each particular structure $\mc{A}$, there is some ordinal $\alpha$ at which the back-and-forth relations stabilize. This gives a way of assigning an ordinal rank to each countable structure giving a measure of that structure's complexity. There are numerous non-equivalent definitions of Scott rank depending on the particular back-and-forth relations one chooses or exactly what one means by stabilizing. Within the last ten years the most commonly used has been the \textit{(unparametrized) Scott rank} due to Montalb\'an \cite{MonSR} which measures the least complexity of a Scott sentence for $\mc{A}$, namely, the Scott rank is the least $\alpha$ such that $\mc{A}$ has a $\Pi_{\alpha+1}$ Scott sentence. The following theorem shows that this definition is particularly robust.

\begin{theorem}[Montalb\'an \cite{MonSR}]\label{thm:robust}
    Let $\mc{A}$ be a countable structure, and $\alpha$ a countable ordinal. The following are equivalent:
    \begin{enumerate}
		\item $\mc{A}$ has a $\Pi_{\alpha+1}$ Scott sentence.
		\item Every finite tuple has an automorphism orbit in $\mc{A}$ which is $\Sigma_\alpha$-definable, i.e., $\mc{A}$ has a \textbf{Scott family} of $\Sigma_{\alpha}$ formulas (without parameters.)
        \item The set $\{\mc{B} \in \Mod(\mc{L}) : \mc{B} \cong \mc{A}\}$ of isomorphic copies of $\mc{A}$ is $\mathbf{\Pi}^0_{\alpha+1}$.
		\item $\mc{A}$ is (boldface) $\mathbf{\Delta}^0_\alpha$-categorical.
	\end{enumerate}
\end{theorem}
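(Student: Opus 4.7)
The plan is to establish the equivalences via the cycle $(1) \Leftrightarrow (2) \Rightarrow (3) \Rightarrow (4) \Rightarrow (1)$: the syntactic equivalence $(1) \Leftrightarrow (2)$ and the complexity computation $(1) \Rightarrow (3)$ are relatively standard, while $(3) \Rightarrow (4)$ and especially $(4) \Rightarrow (1)$ require effectiveness or forcing arguments.

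For $(2) \Rightarrow (1)$, given a Scott family $\{\varphi_{\bar a}(\bar x)\}_{\bar a \in \mc A^{<\omega}}$ of $\Sigma_\alpha$ orbit-formulas, I would write down the standard back-and-forth Scott sentence as a $\Pi_{\alpha+1}$ conjunction, over all tuples $\bar a$, of the existence clause $\exists \bar x\, \varphi_{\bar a}(\bar x)$ together with the extendibility clause
\[\forall \bar x\bigl(\varphi_{\bar a}(\bar x) \to \forall y \bigvee_{\bar b} \exists \bar z\, \varphi_{\bar b}(\bar x, y, \bar z)\bigr),\]
where $\bar b$ ranges over representatives of orbits extending $\bar a$; a back-and-forth then verifies any model is isomorphic to $\mc A$. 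Conversely, for $(1) \Rightarrow (2)$, a $\Pi_{\alpha+1}$ Scott sentence forces orbit-equivalence of tuples to coincide with $\Sigma_\alpha$-type-equivalence, from which a single $\Sigma_\alpha$ orbit-formula for each tuple can be extracted (using that only countably many $\Sigma_\alpha$-types are realized in $\mc A$). The step $(1) \Rightarrow (3)$ is a routine induction on formula complexity, establishing that models of a $\Pi_{\alpha+1}$ $\mc L_{\omega_1\omega}$ sentence form a $\mathbf{\Pi}^0_{\alpha+1}$ class in $\Mod(\mc L)$.

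For $(3) \Rightarrow (4)$, given two copies $\mc B, \mc C$ of $\mc A$, I would build a $\mathbf{\Delta}^0_\alpha$ isomorphism by an effective back-and-forth that uses a $\mathbf 0^{(\alpha)}$-oracle — sufficient to compute the $\mathbf{\Pi}^0_{\alpha+1}$ isomorphism class — to decide at each stage which extensions of the partial map preserve membership in the class.

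The main obstacle is $(4) \Rightarrow (1)$. My approach would be the contrapositive: if $\mc A$ has no $\Pi_{\alpha+1}$ Scott sentence, then by the equivalence $(1) \Leftrightarrow (2)$ some tuple $\bar a$ has a non-$\Sigma_\alpha$-definable orbit, so there exist tuples sharing the $\Sigma_\alpha$-type of $\bar a$ but lying in distinct orbits. Using a Knight-style forcing relative to $\mathbf 0^{(\alpha)}$, one then constructs two copies $\mc B, \mc C$ of $\mc A$ together with distinguished images of $\bar a$ in each, arranged so that no $\mathbf{\Delta}^0_\alpha$-procedure can correctly match their orbits, contradicting $\mathbf{\Delta}^0_\alpha$-categoricity. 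Orchestrating the forcing so that the generic copies remain actually isomorphic to $\mc A$ while simultaneously defeating every $\mathbf{\Delta}^0_\alpha$ candidate isomorphism is the most delicate technical step, and I would expect this is where the bulk of the proof effort lies.
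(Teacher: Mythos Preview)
The paper does not prove this theorem; it is quoted in the introduction as a result of Montalb\'an \cite{MonSR} and used only as motivation, so there is no ``paper's own proof'' to compare your proposal against.

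That said, one step in your proposed cycle is shaky. Your argument for $(3) \Rightarrow (4)$ says you will use a $\mathbf{0}^{(\alpha)}$-oracle ``sufficient to compute the $\mathbf{\Pi}^0_{\alpha+1}$ isomorphism class'' and then run a back-and-forth. But a $\mathbf{\Pi}^0_{\alpha+1}$ set is in general only co-c.e.\ in $\mathbf{0}^{(\alpha)}$, not computable from it, and it is not clear how merely knowing that the isomorphism class is $\mathbf{\Pi}^0_{\alpha+1}$ lets you decide, at each finite stage, which partial maps extend to isomorphisms. The standard route avoids this: one goes $(3) \Rightarrow (1)$ directly via the Lopez-Escobar/Vaught theorem (any $\mathbf{\Pi}^0_{\alpha+1}$ invariant class is axiomatized by a $\Pi_{\alpha+1}$ sentence of $\mc{L}_{\omega_1\omega}$), then $(1) \Rightarrow (2)$, and finally $(2) \Rightarrow (4)$ by using the $\Sigma_\alpha$ Scott family itself to drive the back-and-forth (deciding $\Sigma_\alpha$ formulas is genuinely $\mathbf{\Delta}^0_\alpha$). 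Your sketches of $(1)\Leftrightarrow(2)$ and of $(4)\Rightarrow(1)$ via forcing are in line with how Montalb\'an's argument actually proceeds.
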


In computable structure theory, there has been a long history of studying the Scott analysis of computable structures, or more generally how the Scott analysis of a structure relates computationally to the structure. Nadel \cite{Nadel} showed that every computable structure $\mc{A}$ has Scott rank at most $\omega_1^{\mc{A}}+1$. Harrison \cite{Harrison68}, Millar and Knight \cite{KnightMillar} (building on work of Makkai \cite{Makkai}), Harrison-Trainor, Igusa, and Knight \cite{HTIgusaKnight18}, Harrison-Trainor \cite{HTSpectra}, and Alvir, Greenberg, Harrison-Trainor, and Turetsky \cite{AGNHTT} constructed various examples of computable structures with non-computable Scott ranks.

In particular, if $\mc{A}$ is computable and has computable Scott rank $< \omega_1^{CK}$, then the canonical Scott sentence constructed by Scott will also be a computable sentence. However this construction is not optimal: Alvir, Knight, and McCoy \cite{AlvirKnightMcCoy} showed that there is a computable structure with a $\Pi_2$ Scott sentence but with no computable $\Pi_2$ Scott sentence. It is well-known (see, e.g., Lemma VI.14 of \cite{MBook}) that if a computable structure $\mc{A}$ has a $\Pi_\alpha$ Scott sentence, with $\alpha$ computable, then it has a computable $\Pi_{2 \alpha}$ Scott sentence. This is obtained by noting that if $\mc{A}$ has a $\Pi_\alpha$ Scott sentence $\varphi$, then $\mc{A}$ has the property that for countable structure $\mc{B}$,
\[ \mc{A} \leq_\alpha \mc{B} \Longleftrightarrow \mc{A} \cong \mc{B}.\]
Writing out the definition of the back-and-forth relations, we can see that there is a computable $\Pi_{2\alpha}$ sentence $\psi$ such that
\[ \mc{B} \models \psi \Longleftrightarrow \mc{A} \leq_\alpha \mc{B}.\]
This $\psi$ is a computable $\Pi_{2\alpha}$ Scott sentence.

In this paper, we restrict to the case of structures with $\Pi_2$ Scott sentences, which are called by Montalb\'an \cite{MonEffAtomicStructures} the $\exists$-atomic structures because every automorphism orbit is isolated by a (finitary) existential formula. Even in this seemingly simple class of structures we find that there is significant complexity. As described above, such structures have a computable $\Pi_4$ Scott sentence; we improve the result of \cite{AlvirKnightMcCoy} to show that this upper bound is optimal.

\begin{restatable}{theorem}{pithree}
\label{thm:pi3}
    There is a computable structure with a $\Pi_2$ Scott sentence but no computable $\Sigma_4$ Scott sentence.
\end{restatable}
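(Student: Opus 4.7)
The plan is to construct a computable $\exists$-atomic structure $\mc{A}$, which automatically has a $\Pi_2$ Scott sentence via Theorem~\ref{thm:robust}, and to diagonalize against every candidate computable $\Sigma_4$ Scott sentence. I would work within a flexible class of $\exists$-atomic structures---along the lines of the equivalence-plus-coloring or directed-graph gadgets used in the Alvir--Knight--McCoy proof---so that $\exists$-atomicity can be preserved through a priority construction while local modifications defeat the candidates.

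The first technical step is a reformulation of what a computable $\Sigma_4$ Scott sentence gives. Writing the sentence as $\exists \bar x \, \chi(\bar x)$ with $\chi$ a computable $\Pi_3$ formula, one observes that $\mc{A}$ has such a Scott sentence precisely when there is a tuple $\bar a \in \mc{A}$ for which $(\mc{A}, \bar a)$ carries a computably enumerable Scott family of computable $\Sigma_2$ formulas---this is the standard back-and-forth translation between a computable $\Pi_3$ Scott sentence for $(\mc{A}, \bar a)$ and a c.e.\ Scott family in $\Sigma_2$. The diagonalization target thus becomes: for every finite tuple $\bar a \in \mc{A}$ and every c.e.\ index $e$, the enumerated list $\{\psi_i\}_{i \in W_e}$ of computable $\Sigma_2$ formulas must fail to be a Scott family for $(\mc{A}, \bar a)$. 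This bound on the required oracle translates the diagonalization into something a $\emptyset^{(3)}$-priority argument can handle.

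I would then build $\mc{A}$ gadget by gadget, each gadget carrying enough local freedom to kill a single pair $(\bar a, e)$. To kill a pair, one chooses a tuple $\bar c$ in a fresh gadget whose $\exists$-type would, if $\{\psi_i\}_{i \in W_e}$ were a Scott family of $(\mc{A}, \bar a)$, be isolated by some $\psi_i$; one then arranges either that no $\psi_i$ with the correct $\Sigma_2$ content ever enters $W_e$, or that some $\psi_i \in W_e$ is realized by a tuple in a different orbit. The main obstacle is maintaining $\exists$-atomicity while injuries occur: adding elements late can spoil the $\exists$-type of an already-finalized tuple, so one cannot respond to a new diagonalization requirement by tinkering with an existing gadget. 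I would address this with a rigid isolation protocol in which each completed gadget is equipped with a permanent existential ``signature'' distinguishing it from all future activity, so that later additions live in disjoint orbits and cannot alter the orbit types of earlier tuples; uniformity over the choice of $\bar a$ is then secured by making the gadgets locally symmetric enough that diagonalizing for a single tuple $\bar a$ inside a gadget automatically handles all tuples it generates, and the rest of the verification that $\mc{A}$ is $\exists$-atomic and that every $\Sigma_4$ candidate fails is a standard priority bookkeeping.
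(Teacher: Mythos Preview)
Your reduction step is where the argument breaks. You assert that $(\mc{A},\bar a)$ having a computable $\Pi_3$ Scott sentence is equivalent to $(\mc{A},\bar a)$ having a c.e.\ Scott family of computable $\Sigma_2$ formulas, calling this ``the standard back-and-forth translation.'' Only one direction of that equivalence holds: Alvir--Knight--McCoy show that a c.e.\ $\Sigma_\alpha$ Scott family yields a computable $\Pi_{\alpha+1}$ Scott sentence, but the converse is exactly what this paper refutes (see the corollary in Section~\ref{sec:cor}, which produces a computable structure with a computable $\Pi_2$ Scott sentence and no c.e.\ $\Sigma_1$ Scott family; the same phenomenon persists one level up). So even if you succeed in diagonalizing against every pair $(\bar a,e)$ so that no $W_e$ enumerates a $\Sigma_2$ Scott family for $(\mc{A},\bar a)$, you have not ruled out a computable $\Pi_3$ Scott sentence for some $(\mc{A},\bar a)$, and hence not a computable $\Sigma_4$ Scott sentence for $\mc{A}$. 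Your diagonalization is aimed at a strictly smaller class of candidates than the one you need to defeat.

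The paper avoids this trap by diagonalizing against the $\Pi_3$ sentences themselves rather than against Scott families. For each computable $\Pi_3$ sentence $\theta_e$ it builds, alongside $\mc{A}$, an auxiliary structure $\mc{B}_e$ with $\mc{B}_{e,s}\cong\mc{A}_s$ at every finite stage; a $\Pi^0_2$-style guessing mechanism either makes $\mc{B}_e\cong\mc{A}$ while forcing $\mc{A}\not\models\theta_e$, or makes $\mc{B}_e\not\cong\mc{A}$ with $\mc{B}_e\models\theta_e$. This yields no computable $\Pi_3$ Scott sentence, and the passage to $\Sigma_4$ is then handled separately by the $\mc{A}\cdot\omega$ trick of Section~2 rather than by unpacking $\Sigma_4$ into parameters. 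If you want to salvage your outline, you would need to diagonalize directly against computable $\Pi_3$ formulas $\chi(\bar x)$ (as candidate Scott sentences for $(\mc{A},\bar a)$), not against enumerated families of $\Sigma_2$ formulas.
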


\noindent By taking Marker extensions / jump inversions, one can also obtain that for any $n$ there is a computable structure with a $\Pi_{n}$ Scott sentence but no computable $\Sigma_{n + 2}$ Scott sentence. (One can also extend this to non-limit ordinals though we will remain at finite levels since all of the complexity appears there; see Corollary \ref{cor:nopi3}.) We leave open the question of improving this to an example with no $\Sigma_{2 n}$ Scott sentence.

In the second part of this paper, we consider the \emph{effective Scott rank}, i.e., the least $\alpha$ such that a given structure $\mc{A}$ has a computable $\Pi_{\alpha + 1}$ Scott sentence (when a computable Scott sentence exists). Whether or not this effective Scott rank is as robust as the (non-effective) Scott rank was an open problem from \cite{AlvirKnightMcCoy}. It is known that there are a few conditions equivalent to having a formally $\Sigma_{\alpha}$ Scott family; a ``boldface" version of this result led to many of the equivalences of \Cref{thm:robust}, so it was not unreasonable to expect similar robustness in the effective setting. In fact, it was known that for a computable structure, having having a formally $\Sigma_{\alpha}$ Scott family implies that the structure has a computable $\Pi_{\alpha+1}$ Scott sentence. Whether the reverse implication held was left open. 

In Section 4, we show that the effective Scott rank is not robust and that there is no good way to characterize when a structure has a computable $\Pi_n$ Scott sentence, even when the structure is computable.

\begin{restatable}{theorem}{complete}\label{thm:complete}
    With $(\mc{A}_i)_{i \in \omega}$ an effective list of (possibly partial) structures in a sufficiently rich language, the set
    \[ \{i \mid \text{$\mc{A}_i$ has a computable $\Pi_2$ Scott sentence}\}\]
    is $\Pi^1_1$-$m$-complete.
\end{restatable}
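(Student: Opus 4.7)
The plan has two parts. First, I would show the set is $\Pi^1_1$ by unfolding: $\mathcal{A}_i$ has a computable $\Pi_2$ Scott sentence iff $\exists e$ such that $\varphi_e$ is a computable $\Pi_2$ sentence, $\mathcal{A}_i \models \varphi_e$, and every countable model of $\varphi_e$ is isomorphic to $\mathcal{A}_i$. The first two conditions are arithmetic in $e$ and $i$. For the last, I would construct a computable tree $S_{e,i}$ whose paths code pairs consisting of a countable $\mathcal{B} \models \varphi_e$ together with a witness to $\mathcal{B} \not\cong \mathcal{A}_i$ (say, ranks on the back-and-forth tree between $\mathcal{B}$ and $\mathcal{A}_i$). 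Then $\varphi_e$ is a Scott sentence iff $S_{e,i}$ is well-founded, a $\Pi^1_1$ condition. Since $\Pi^1_1$ is closed under number quantification, the overall set is $\Pi^1_1$.

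For $\Pi^1_1$-hardness, I would reduce from the $\Pi^1_1$-complete set of indices of well-founded computable subtrees of $\omega^{<\omega}$. Given such a tree $T$, the goal is to construct $\mathcal{A}(T)$ uniformly in $T$ such that $\mathcal{A}(T)$ has a computable $\Pi_2$ Scott sentence iff $T$ is well-founded. My construction would adapt Theorem~\ref{thm:pi3}, where a computable structure $\mathcal{A}^*$ with a $\Pi_2$ Scott sentence but no computable one is built by satisfying, for each computable $\Pi_2$ sentence $\varphi_e$, a ``defeat requirement'' $R_e$ that introduces a configuration incompatible with $\varphi_e$ being a Scott sentence. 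In the parameterized version, each $R_e$ would act incrementally along branches of $T$: a step is taken only when a new node extending a designated path in $T$ appears. If $T$ is ill-founded, every $R_e$ can be fully satisfied, defeating every candidate $\varphi_e$; if $T$ is well-founded, no $R_e$'s configuration is completed.

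The hardest part will be ensuring that when $T$ is well-founded, the structure $\mathcal{A}(T)$---despite the partially-built configurations attached to every finite initial segment of $T$---is isomorphic to a fixed computable structure $\mathcal{B}$ with a known computable $\Pi_2$ Scott sentence. I would handle this by designing each configuration to be ``latent'' in the sense that its partial form is automorphism-indistinguishable from the elements of $\mathcal{B}$, and only its complete form (attained along an infinite branch of $T$) commits the structure to the bad shape needed to defeat $\varphi_e$. The verification then splits: if $T$ is well-founded, every branch of $T$ terminates, every configuration remains latent, and $\mathcal{A}(T) \cong \mathcal{B}$; if $T$ is ill-founded, some infinite branch of $T$ completes every $R_e$, yielding (a copy of) the bad structure from Theorem~\ref{thm:pi3} with no computable $\Pi_2$ Scott sentence.
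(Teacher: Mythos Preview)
Your upper-bound sketch is essentially correct and matches the paper's reasoning.

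For hardness, your overall shape---reduce from well-foundedness of computable trees, and gate the diagonalization against each candidate $\theta_e$ by progress along $T$---is the right one, and is what the paper does. But two specific choices in your plan diverge from the paper and create real difficulties.

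First, you cannot computably designate a single path in $T$ for $R_e$ to follow: you do not know which (if any) branch is infinite. The paper's solution is to index the elements of the $e$th sort by \emph{all} nodes of a c.e.\ subtree $T_\infty\subseteq T$, adding children only at $e$-expansionary stages. The infinite path enters only in the \emph{non-effective} construction of the counterexample $\mc{B}_e$: given a path $\pi$ through $T$, one adjoins to $\mc{A}_T$ a single new element $c$ carrying the labels $\ell_\rho$ for all $\rho\prec\pi$, and this $c$ cannot be matched by any element of $\mc{A}_T$.

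Second, your target ``$\mc{A}(T)\cong\mc{B}$ for a fixed $\mc{B}$ whenever $T$ is well-founded'' is both unnecessary and hard to arrange, since the partial configurations you plant along every branch of $T$ will encode the shape of $T$ into the structure. The paper does not attempt this. Instead, when $T$ is well-founded, $\mc{A}_T$ genuinely depends on $T$, and one writes down a computable $\Pi_2$ sentence $\psi_T$ (uniformly in the index for $T$) and proves it is a Scott sentence by an induction along the well-founded tree $T_\infty$: any model of $\psi_T$ has, for each $\sigma\in T_\infty$, a unique element of the corresponding type, and well-foundedness of $T_\infty$ rules out any extra ``limit'' element. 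This inductive argument is exactly where well-foundedness is used, and it replaces your ``latent configuration'' idea entirely.

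Finally, the template to adapt is not Theorem~\ref{thm:pi3} (which diagonalizes against $\Pi_3$ sentences and carries a more involved priority structure) but the simpler $\Pi_2$ warm-up that precedes it in the paper.
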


\noindent By a sufficiently rich language we mean any language which is universal, e.g., those including at least a binary relation symbol. This result was also obtained independently and around the same time by Knight, Lange, and McCoy \cite{KLM}.

Again by taking Marker extensions / jump inversions, one can replace $\Pi_2$ with $\Pi_n$, or through the hyperarithmetic hierarchy, as in Corollary \ref{cor:complete}. One can obtain from this theorem several interesting corollaries which we give in Section \ref{sec:cor}, for example:

\begin{corollary}
    There is a computable structure $\mc{A}$ with a $\Pi_2$ Scott sentence but no computable $\Pi_2$ Scott sentence, but with a computable $\Pi_2$ sentence $\varphi$ such that, for all hyperarithmetic $\mc{B}$,
    \[ \mc{B} \models \varphi \;\Longleftrightarrow\; \mc{A} \cong \mc{B}.\]
\end{corollary}

We leave open several questions. Though there are further generalizations that one can ask (e.g., to infinite ordinals), we see the two mains questions to be the following.

\begin{question}\label{q:generalize}
    Is there, for every $n$, a computable structure $\mc{A}$ with a $\Pi_n$ Scott sentence but with no computable $\Sigma_{2n}$ Scott sentence?
\end{question}

\begin{question}
    Is the set of computable structures with a $\Pi_2$ Scott sentence and a computable $\Pi_3$ Scott sentence $\Pi^1_1$-$m$-complete?
\end{question}

For Question \ref{q:generalize}, recall that the upper bound of $\Pi_{2n}$ was obtained as follows. Given $\mc{A}$ with a $\Pi_n$ Scott sentence, $\mc{A} \leq_n \mc{B} \Longrightarrow \mc{A} \cong \mc{B}$. We find a $\Pi_{2n}$ sentence $\psi$ such that $\mc{B} \models \psi \Longleftrightarrow \mc{A} \leq_n \mc{B}$. Chen, Gonzalez, and Harrison-Trainor \cite{ChenGonzalezHT} have recently showed that the set of pairs $\{(\mc{A},\mc{B}) : \mc{A} \leq_n \mc{B}\}$ is $\mathbf{\Pi}^0_{2n}$-complete. This means that there is no way of defining the set $\{ \mc{B} : \mc{A} \leq_n \mc{B}\}$ in a way that is both better than $\Pi_{2n}$ and which is ``schematic'' in $\mc{A}$. On the other hand, they showed that the set $\{ \mc{B} : \mc{A} \leq_n \mc{B}\}$ is $\mathbf{\Pi}^0_{n+2}$. This  relies on the fact that every $\Pi_n$-type in $\mc{A}$ is $\Pi_n$-definable, and so is a non-effective argument. The case of Question \ref{q:generalize} solved in this paper, $n = 2$, satisfies $n+2=2n = 4$ and the distinction does not yet show up at this level. Thus one might expect Question \ref{q:generalize} for $n\geq 3$ to involve some new insight.

\section{A simplifying remark}

Given a structure $\mc{A}$, we can consider the structure $\mc{A} \cdot \omega$ which consists of an equivalence relation $E$ with infinitely many equivalence classes, on each of which is a copy of $\mc{A}$. Then:
    \begin{enumerate}
        \item if $\mc{A}$ has a (computable) $\Pi_\alpha$ Scott sentence, then $\mc{A} \cdot \omega$ will have a (computable) $\Pi_\alpha$ Scott sentence, and
        \item if $\mc{A}$ has no (computable) $\Pi_\alpha$ Scott sentence, then $\mc{A} \cdot \omega$ will have no (computable) $\Sigma_{\alpha + 1}$ Scott sentence.
    \end{enumerate}
    (1) is straightforward, and (2) uses the fact that $\mc{B}$ has a $\Sigma_{\alpha+1}$ Scott Sentence if and only if there is $\bar{b} \in \mc{B}$ such that $(\mc{B},\bar{b})$ has a $\Pi_{\alpha}$ Scott sentence. (This fact was first stated by Montalb\'an in \cite{MonSR} and proved in \cite{MonEffAtomicStructures,AGNHTT}.)

    Given the above facts, we do not need to consider $\Sigma$ Scott sentences, as e.g., the fact that there is a computable structure with a $\Pi_2$ Scott sentence but no computable $\Pi_2$ Scott sentence yields that there is a structure with a $\Pi_2$ Scott sentence but no computable $\Sigma_3$ Scott sentence. When we prove Theorem \ref{thm:pi3} we will prove that there is a computable structure with a $\Pi_2$ Scott sentence but no computable $\Pi_3$ Scott sentence; it will follow that there is a computable structure with a $\Pi_2$ Scott sentence but no computable $\Sigma_4$ Scott sentence.


\section{A computable structure with a $\Pi_2$ Scott sentence but no computable $\Pi_3$ Scott sentence}

As a warmup we sketch a new construction of a computable structure with a $\Pi_2$ Scott sentence but no computable $\Pi_2$ Scott sentence. Theorems \ref{thm:pi3} and \ref{thm:complete} will build on this technique and so we begin by presenting it in its simplest form.

\begin{theorem}[Alvir, Knight, and McCoy \cite{AlvirKnightMcCoy}]
    There is a computable structure with a $\Pi_2$ Scott sentence but with no computable $\Pi_2$ Scott sentence.
\end{theorem}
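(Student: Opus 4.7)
The plan is to build a computable $\exists$-atomic structure $\mc{A}$ and diagonalize directly against every c.e.\ list of existential formulas being a Scott family. By \Cref{thm:robust}, having a $\Pi_2$ Scott sentence is equivalent to $\mc{A}$ being $\exists$-atomic, i.e., every automorphism orbit being isolated by a finitary existential formula. For a computable structure, having a \emph{computable} $\Pi_2$ Scott sentence is equivalent to possessing a c.e.\ Scott family of existential formulas (one direction is immediate; the other comes from unwinding the inner $\exists$-level of a computable $\Pi_2$ sentence). So it suffices to defeat every c.e.\ list $\{\varphi_n : n \in W_e\}$ of existential formulas as a candidate Scott family for $\mc{A}$, while keeping $\mc{A}$ computable and $\exists$-atomic.

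I would build $\mc{A}$ as a disjoint union of gadgets $G_e$, one per requirement $R_e$: ``$\{\varphi_n : n \in W_e\}$ is not a Scott family for $\mc{A}$.'' Each $G_e$ lives in a modest language (say, a symmetric binary relation together with some unary colors) and contains a distinguished element $a_e$. The orbit of $a_e$ is intended to be isolated by the complete existential type of its connected component, which is a small rigid pattern. At each stage $s$, whenever $W_e$ enumerates a formula $\varphi$ with $\mc{A}_s \models \varphi(a_e)$ that could plausibly isolate $a_e$'s orbit, $R_e$ responds by adding to $G_e$ a new copy of $a_e$'s current pattern together with one additional feature (an extra neighbor or extra color) chosen so that the new element $a'_e$ still satisfies $\varphi$ but lies in a different orbit from $a_e$. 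This action kills $\varphi$ as an isolating formula.

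The crucial design choice is to make $G_e$ \emph{a priori} $\exists$-atomic regardless of how the diagonalization plays out. I do this by fixing in advance a finite ``menu'' of finite patterns that $G_e$ may ever realize; each pattern is rigid, each has a complete existential type in finitely many variables that is provably inconsistent with the existential types of the other patterns on the menu, and the construction only controls \emph{how many} copies of each pattern eventually appear (zero, one, or infinitely many). Because the menu is fixed, $\mc{A}$ is $\exists$-atomic at the end: its true Scott family is the union, over $e$, of the complete existential types of the patterns realized in $G_e$. The diagonalization ensures this family cannot be c.e.

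The main obstacle is simply verifying that $R_e$ succeeds: that for each $e$, either no formula in $W_e$ ever isolates $a_e$'s orbit, or $R_e$'s responses eventually force some non-isomorphic element to satisfy every formula $a_e$ satisfies. A finite-injury/counting argument, bounded by the size of the fixed menu, handles this cleanly, and — as the authors foreshadow — the same modular gadget technology will scale up to the $\Pi_3$ diagonalization needed for \Cref{thm:pi3} and the $\Pi^1_1$-completeness construction for \Cref{thm:complete}.
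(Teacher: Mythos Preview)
Your reduction step is incorrect, and this breaks the whole argument. You assert that for a computable structure, having a computable $\Pi_2$ Scott sentence is \emph{equivalent} to having a c.e.\ Scott family of existential formulas. Only one direction of this is true: a c.e.\ Scott family of $\Sigma_1$ formulas does yield a computable $\Pi_2$ Scott sentence (this is the result of Alvir, Knight, and McCoy cited in the paper). The converse fails---the paper states this explicitly in Section~4 and even derives, as a corollary of Theorem~\ref{thm:complete}, a computable structure with a computable $\Pi_2$ Scott sentence but no c.e.\ Scott family of computable $\Sigma_1$ formulas. Your ``unwinding the inner $\exists$-level'' heuristic does not produce a Scott family: the conjuncts $\forall\bar{x}\,\varphi_i(\bar{x})$ of a computable $\Pi_2$ Scott sentence need not isolate orbits, and there is no general way to extract orbit-isolating $\Sigma_1$ formulas effectively from such a sentence. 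Consequently, even if your diagonalization succeeds in killing every c.e.\ candidate Scott family, the resulting structure could still have a computable $\Pi_2$ Scott sentence, and you have proved nothing.

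The paper's approach is genuinely different and avoids this trap: it diagonalizes directly against the computable $\Pi_2$ \emph{sentences} $\theta_e$. For each $e$ it builds, alongside $\mc{A}$, a companion structure $\mc{B}_e$ with $\mc{B}_{e,s}\cong\mc{A}_s$ at every stage, arranging that if $\mc{A}\models\theta_e$ (detected via infinitely many $e$-expansionary stages) then $\mc{B}_e\models\theta_e$ as well, while a single ``limit'' element $c\in\mc{B}_e$ acquires infinitely many labels that no element of $\mc{A}$ has, so $\mc{B}_e\not\cong\mc{A}$. This directly witnesses that $\theta_e$ is not a Scott sentence. The $\exists$-atomicity of $\mc{A}$ is maintained by the label scheme ($\ell_i$ and $\ell_i^\dagger$), not by a fixed finite menu. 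If you want your modular-gadget idea to work, you must diagonalize against sentences, not against Scott families; the companion-structure technique is exactly what scales to Theorems~\ref{thm:pi3} and~\ref{thm:complete}.
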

\begin{proof}[Proof sketch]
        We list all of the computable $\Pi_2$ sentences as $(\theta_e)_{e \in \omega}$ where
    \[ \theta_e = \bigdoublewedge_{i \in \omega} \forall \bar{x}_{e,i} \;\; \varphi_{e,i}(\bar{x}_{e,i}) \]
    where the $\varphi_{e,i}(\bar{x}_{e,i})$ are computable $\Sigma_1$ formulas uniformly in $e,i$ and the arities of each of the $\bar{x}_{e,i}$ are also computable in the indices.

   We can take $\mc{A}$ to be the ``bouquet graph'' $\mc{G}_1(\mc{F})$ of a collection $\mc{F}$ of subsets of $\omega$. $\mc{A}$ will be $\exists$-atomic and hence have a $\Pi_2$ Scott sentence. Recall that by Lemma 8.17 of  \cite{Montalbán_2021} the structure $\mc{G}_1(\mc{F})$ is $\exists$-atomic exactly when $\mc{F}$ is discrete.

    The structure $\mathcal{A}$ will consist of a number of elements each of which is given various c.e.\ labels. We can think of $\mc{A}$ as consisting of the elements which are the centers of the flowers, labeled by labels $\ell_n$; putting a label $\ell_n$ on a flower means to add a loop of length $n+3$ to the flower.

    We divide our label into \textit{sort labels} $(u_e)_{e \in \omega}$ such that only exactly one label holds of each element, dividing the domain into the disjoint sets $U_e = \{ x \in A : u_e(x) \}$; we think of these as different sorts of the structure, and call the elements of $U_e$ the \textit{$e$th sort}. Then we have two other sets of labels $(\ell_i)_{i \in \omega}$ and $(\ell^\dagger_i)_{i \in \omega}$ which we call simply \textit{labels}. For example, we may use loops of length $2e + 3$ for the labels $u_e$, and of length $4i + 4$ for the $\ell_i$, and $4i + 6$ for the labels $\ell^\dagger_i$.

    We use the $e$th sort to diagonalize against $\theta_e$. We do this by simultaneously building a structure $\mc{B}_e$ such that if $\mc{A} \models \theta_e$ then $\mc{B}_e \models \theta_e$ and $\mc{A} \ncong \mc{B}$. $\mc{B}_e$ will differ from $\mc{A}$ only on the $e$th sort $U_e$. We build $\mc{A}$ by finite approximations $\mc{A} = \bigcup_s \mc{A}_s$. $\mc{B}_e$ will also be built by approximations $\mc{B}_e =\bigcup_s \mc{B}_{e,s}$ with each $\mc{B}_{e,s} \cong \mc{A}_s$.

    When we describe the construction of $\mc{A} = \bigcup \mc{A}_s$, we will describe the construction of the $e$th sort. The constructions for the different sorts should be thought of as happening simultaneously. 

    During the construction certain stages will be \textit{$e$-expansionary stages} where we get evidence that $\mc{A}_s \models \theta_e$. We use $k = k[s]$ to keep track of the number of expansionary stages and $n = n[s] = k[s] + 1$ to keep track of the number of flowers in $\mc{A}_s$. At stage $s$, $\mc{A}_s$ will have elements $a_1,\ldots,a_n$ and $\mc{B}_{e,s}$ will have elements $b_1,\ldots,b_{n-1},c$.
    
\medskip

\noindent \textit{Construction.}

\medskip

\noindent \textit{Stage 0.} We begin with $n = \extent[0] = 1$ and $k = k[0] = 0$. $\mc{A}_0$ consists of a single element $a_1$, and $\mc{B}_{e,0}$ of a single element $c$. We give $a_1$ and $c$ the same label $\ell_0$.

\medskip

\noindent \textit{Stage $s+1$.} Suppose that we have constructed $\mc{A}_s$ and $\mc{B}_{e,s}$ with $k = k[s]$ and $n = n[s] = k+1$. The elements of $\mc{A}_s$ will be $a_1,\ldots,a_n$. For each $i \leq n$, $a_i$ will have the labels $\ell_j$ for $j \leq i$; and for $i < n$, $a_i$ will have the label $\ell^\dagger_i$. The elements of $\mc{B}_{e,s}$ will be $b_1,\ldots,b_{n-1},c$. For each $i \leq n-1$, $b_i$ will have the labels $\ell_j$ for $j \leq i$ and $\ell_i^\dagger$. The element $c$ will have the labels $\ell_j$ for $j \leq n$.

Let $s_k$ be the last expansionary stage. Check whether
\[ \mc{A}_s \models \bigwedge_{i \leq k } \forall \bar{x}_{e,i} \in \mc{A}_{s_k} \;\; \varphi_{e,i}(\bar{x}_{e,i}).\]
If not, then this is not an expansionary stage. Make no changes to $\mc{A}$, $\mc{B}_e$, $k[s+1] = k[s]$, or $n[s+1] = n[s]$. If so then this stage is an expansionary stage. In $\mc{A}_{s+1}$ put the label $\ell_i^\dagger$ on $a_n$, and add a new element $a_{n+1}$ to with labels $\ell_i$ for $i \leq n+1$. In $\mc{B}_{e,s+1}$ add a new element $b_n$ with labels $\ell_i$ for $i \leq n$ and $\ell_n^\dagger$. Put the label $\ell_{n+1}$ on $c$. Set $k[s+1] = k[s] + 1$ and $n[s+1] = n[s] + 1$.

\medskip

\noindent \textit{End construction.}

\medskip

We will not give the full details of the verification. However, the general idea of the argument is as follows. We argue that there are infinitely many expansionary stages if and only if $\mc{A} \models \theta_e$. On the one hand, if there are infinitely many expansionary stages $s$ then for every $k$ there is $s$ such that
\[ \mc{A}_s \models \bigwedge_{i \leq k } \forall \bar{x}_{e,i} \in \mc{A}_{s_k} \;\; \varphi_{e,i}(\bar{x}_{e,i}).\]
and so as $\varphi_{e,i}$ is $\Sigma_1$ we have $\mc{A} \models \theta_e$. On the other hand, if there are only finitely many expansionary stages, then there is $i$ and $\bar{x}$ such that for all $s$ we have
\[ \mc{A}_s \models \neg \varphi_{e,i}(\bar{x}).\]
Then $\mc{A} \models \neg \theta_e$.

If there are infinitely many expansionary stages, then $\mc{B}_{e} \ncong \mc{A}$ as the element $c \in \mc{B}_e$ has infinitely many labels $\ell_i$ for each $i$, but $\mc{A}$ has no such element. We can also argue that $\mc{B}_{e,s} \models \theta_n$; the tricky part is to check tuples $\bar{x}_{e,i}$ containing $c$. Here we use the fact that at the $k+1$st expansionary stage $s > s_k$ we have
\[ \mc{A}_s \models \bigwedge_{i \leq k } \forall \bar{x}_{e,i} \in \mc{A}_{s_k} \;\; \varphi_{e,i}(\bar{x}_{e,i}) \]
and that that the isomorphism $\mc{A}_s \cong \mc{B}_{e,s}$ extends the isomorphism $\mc{A}_{s_k} \cong \mc{B}_{e,s_k}$.

Finally, we need to check that $\mc{A}$ is always $\exists$-atomic and hence always has a $\Pi_2$ Scott sentence. Any element with a label $\ell_i^\dagger$ is isolated by that label in its sort; and in each sort, there is at most one element without such a label (and only if there were finitely many expansionary stages in that sort), and that element is isolated by a label $\ell_i$ which no other element of the sort has.
\end{proof}

The construction of a computable structure with a $\Pi_2$ Scott sentence but no computable $\Pi_3$ Scott sentence is similar, but the guessing as to whether a $\Pi_3$ sentence is true in $\mc{A}$ is more involved. Otherwise, the general ideas in the argument remain the same. Recall that by proving that such a structure exists, we have also shown that there is a computable structure with a $\Pi_2$ Scott sentence but no computable $\Sigma_4$ Scott sentence.

\pithree*


\begin{proof}
    We list all of the computable $\Pi_3$ sentences as $(\theta_e)_{e \in \omega}$ where
    \[ \theta_e = \bigdoublewedge_{i \in \omega} \forall \bar{x}_{e,i} \; \bigdoublevee_{j \in \omega} \exists \bar{y}_{e,i,j} \;\; \varphi_{e,i,j}(\bar{x}_{e,i},\bar{y}_{e,i,j}) \]
    where the $\varphi_{e,i,j}(\bar{x}_{e,i},\bar{y}_{e,i,j})$ are computable $\Pi_1$ formulas uniformly in $e,i,j$ and the arities of each of the $\bar{x}_{e,i}$ and $\bar{y}_{e,i,j}$ are also computable in the indices.

    We will build a structure $\mc{A}$ which is $\exists$-atomic. The structure $\mc{A}$ consist of a number of elements each of which is given various c.e.\ labels. Formally, $\mc{A}$ can be taken to be the ``bouquet graph'' $\mc{G}_1(\mc{F})$ of a collection $\mc{F}$ of subsets of $\omega$---see Section VIII of \cite{MBook}. As we construct $\mc{A}$, we think of ourselves as giving the centers of various ``flower graphs'' and when we add labels this corresponds to adding loops to the flower graphs. Thus in a computable construction of $\mc{A}$ we can add labels to elements in a c.e.\ way.
    
    We introduce two sets of distinguished labels. First, we have \textit{sort labels} $(u_e)_{e \in \omega}$ such that only exactly one label holds of each element, dividing the domain into the disjoint sets $U_e = \{ x \in A : u_e(x) \}$; we think of these as different sorts of the structure, and call the elements of $U_e$ the \textit{$e$th sort}. The second set of labels $(\ell_i)_{i \in \omega}$ we will just call \textit{labels}, and they will be the most important labels for the construction.

Within $U_e$ we will diagonalize against $\theta_e$. We will do this by constructing another countable structure $\mc{B}_e$ such that if $\mc{A} \models \theta_e$ then $\mc{B}_e \models \theta_e$ and $\mc{A} \ncong \mc{B}_e$. $\mc{B}_e$ will be the same as $\mc{A}$ except for the $e$th sort $U_e$ on which they will differ. At each stage $s$ we will have a finite approximation $\mc{A}_s$ to $\mc{A}$, with $\mc{A} = \bigcup_s \mc{A}_s$. $\mc{B}_e$ will also be built by finite approximations $\mc{B}_e =\bigcup_s \mc{B}_{e,s}$ with each $\mc{B}_{e,s} \cong \mc{A}_s$. Thus one can think of $\mc{A}$ and $\mc{B}_e$ as direct limits of direct systems of the same finite structures but with different embeddings
\[ \mc{A}_0 \to \mc{A}_1 \to \mc{A}_2 \to \cdots .\]
At each stage $s$, each element of $\mc{A}_s$ will satisfy some finite set of labels (in fact two, a domain label and another label) that is satisfied by no other elements except duplicates of itself which satisfy exactly the same labels. Thus the isomorphism $\mc{A}_s \to \mc{B}_{e,s}$ will be easily and uniquely determined up to mapping duplicates.

For each $e$, we have requirements $\mc{R}^{e}_{i,\bar{b}}$ for each $i \in \omega$ and $\bar{b} \in \mc{B}_e$ of the right arity; to make sense of this, we must fix ahead of time the domain of $\mc{B}_e$ even though these elements will only be added to $\mc{B}_e$ slowly over time. The elements $\bar{b}$ may be in any sort of $\mc{B}_e$, not just the $e$th. Each requirement will have a stage $\stage = \stage(\mc{R}^{e}_{i,\bar{b}})$ when it began working; it will only start working at some stage when $\bar{b} \in \mc{B}_{e,\stage}$. At this stage, there will be an element $\bar{a}$ of $\mc{A}_\stage$ which corresponds to $\bar{b} \in \mc{B}_{e,\stage}$ at this stage $\stage$ via the isomorphism $\mc{A}_\stage \cong \mc{B}_{e,\stage}$. The requirement will be met if whenever $\mc{A} \models \bigdoublevee_j \exists \bar{y}_{e,i,j} \; \varphi_{e,i,j}(\bar{a},\bar{y}_{e,i,j})$ then $\mc{B}_e \models \bigdoublevee_j \exists \bar{y}_{e,i,j} \; \varphi_{e,i,j}(\bar{b},\bar{y}_{e,i,j})$; equivalently, if $\mc{B}_e \models \bigdoublewedge_j \forall \bar{y}_{e,i,j} \; \neg \varphi_{e,i,j}(\bar{b},\bar{y}_{e,i,j})$ then $\mc{A} \models \bigdoublewedge_j \forall \bar{y}_{e,i,j} \; \neg \varphi_{e,i,j}(\bar{a},\bar{y}_{e,i,j})$. One can think of the requirements for a given $e$ as splitting up, into the different possible outermost witnesses, the supreme requirement that if $\mc{A} \models \theta_e$ then $\mc{B}_e \models \theta_e$ (or equivalently if $\mc{B} \models \neg \theta_e$ then $\mc{A} \models \neg \theta_e$).  The requirement will require attention when we believe that $\mc{B}_e \models \forall \bar{y}_{e,i,j} \; \neg \varphi_{e,i,j}(\bar{b},\bar{y}_{e,i,j})$. This is $\Pi^0_2$ and so if this is the case, this requirement will require attention infinitely often. Meeting a single requirement in this infinitary manner will be enough, as for $\bar{b} \in \mc{B}_e$ we have that if $\mc{B}_e \models \forall \bar{y}_{e,i,j} \; \neg \varphi_{e,i,j}(\bar{b},\bar{y}_{e,i,j})$ then $\mc{B}_e \nmodels \theta_e$ and the requirement will have the outcome that $\mc{B}_e \cong \mc{A}$ so that $\mc{A} \nmodels \theta_e$. Thus even though the injury may be infinite, any guessing is simply finitary since if a requirement is injured infinitely often, then it does not need to be satisfied as a higher priority requirement will be satisfied. In addition to the value $\stage = \stage(\mc{R}^{e}_{i,\bar{b}})$, each requirement will also have a value $\att = \att(\mc{R}^{e}_{i,\bar{b}})$ which denotes the number of times that it has previously received attention.

The general idea of the construction is that, over time, we work to build $\mc{B}_e$ taking, at each stage, a step towards making $\mc{B}_e \ncong \mc{A}$. In particular, there will be some special element of the $e$th sort of $\mc{B}_e$ which we will work to make different from each element of $\mc{A}$. However, whenever we see evidence that $\mc{B}_e \models \neg \theta_e$, we will undo our previous work to make $\mc{B}_e$ different from $\mc{A}$; since this makes $\mc{B}_e$ look similar to $\mc{A}$, we can at the same time transfer our evidence that $\mc{B}_e \models \neg \theta_e$ to evidence that $\mc{A} \models \neg \theta_e$. If in fact $\mc{B}_e \models \neg \theta_e$, then we will continually find evidence of this, continually rolling back our construction, so that actually $\mc{B}_e$ never becomes different from $\mc{A}$. Since $\mc{A} \cong \mc{B}_e$, $\mc{A} \models \neg \theta_e$. On the other hand, if $\mc{B}_e \models \theta_e$, then we will have $\mc{A} \ncong \mc{B}_e$ so that $\theta_e$ cannot be a Scott sentence for $\mc{A}$.

We describe the construction of $\mc{A}$ and the $\mc{B}_e$ stage-by-stage. We will describe for a fixed $e$ the construction of the $e$th sort of $\mc{A}$ together with the $e$th sort of $\mc{B}_e$. However all of these constructions for the different sorts should be thought of as happening simultaneously. All of the other sorts of $\mc{B}_e$ will be exactly the same as $\mc{A}$, so when constructing the $e$th sort of $\mc{A}$ we need only discuss $\mc{B}_e$ and not any other $\mc{B}_m$. In the construction that follows, without explicitly saying so we put the label $u_e$ on all elements that we add to $\mc{A}$ and $\mc{B}_e$ so that they are in $U_e$. We sometimes suppress $e$ and the label $u_e$ in what follows, e.g., we write for $\theta = \theta_e$
\[ \theta = \bigdoublewedge_{i \in \omega} \forall \bar{x}_{i} \; \bigdoublevee_{j \in \omega} \exists \bar{y}_{i,j} \;\; \varphi_{i,j}(\bar{x}_{i},\bar{y}_{i,j}).\]
However, when we are guessing at whether some formula is true in $\mc{A}$ or $\mc{B}_e$, we must allow the quantifiers to quantify over all of the sorts; we will remind the reader of this later.

We begin at stage $0$ with a single element $a_0$ with no labels (other than $u_e$, which we no longer mention). At each stage $s$ we will have a number $\extent = \extent[s]$. There will be $\extent$ \textit{active elements} designated $a_1[s],\ldots,a_\extent[s]$. Each active element will have a label that no other active element has. The rest of the elements of $\mc{A}_s$ will be \textit{duplicate elements} and will have exactly the same labels as one of the active elements. When we declare a (formerly active) element to be a duplicate of $a_i$, we will give it exactly the same labels as $a_i$, and whenever $a_i$ gets a new label, the element will as well; thus if later $a_i$ becomes a duplicate of some other $a_j$, then any duplicates of $a_i$ will become duplicates of $a_j$. When an active element $a_i$ is declared to be a duplicate of another, becoming inactive, it gives up the designation $a_i$. Recall that $\mc{B}_{e,s}$ will be isomorphic to $\mc{A}_s$; however, $\mc{B}_e$ will follow different embeddings from one stage to the next. The active elements of $\mc{B}_{e,s}$ will be $b_1[s],\ldots,b_{\extent-1}[s],c$. The isomorphism $\mc{A}_s \to \mc{B}_{e,s}$ will map $a_i \mapsto b_i$ and $a_\extent \mapsto c$. (The duplicate elements will also be mapped correspondingly.)

To illustrate the idea of the construction, we will describe a generic stage of the construction when no requirement requires attention. Suppose that we have defined $\mc{A}_s$ and $\mc{B}_{e,s}$ with $\extent = \extent[s]$. Define $\mc{A}_{s+1}$ as follows. Add a new element $a_{\extent+1}$, and give $a_{\extent+1}$ all of the labels that $a_\extent$ had in $\mc{A}_s$. Give $a_\extent$ and $a_{\extent+1}$ each a new label unique only to it. For $\mc{B}_{e,s+1}$, add a new element $b_\extent$ and let $b_\extent$ have the same labels as $a_\extent$, and let $c$ have the same labels as $a_{\extent+1}$. If we do this at every stage, then $\mc{A}$ will have elements $a_1,a_2,\ldots$ and $\mc{B}_e$ will have corresponding elements $b_1,b_2,\ldots$ with the same labels, but also an additional element $c$ which does not correspond to any element of $\mc{A}$. Thus $\mc{A} \ncong \mc{B}_e$. However we have not ensured that if $\mc{A} \models \theta_e$ then $\mc{B}_e \models \theta_e$.

To ensure that if $\mc{A} \models \theta_e$ then $\mc{B}_e \models \theta_e$, our strategy (for some $i$ and $\bar{b}$) is to wait for stages at which we think that  $\mc{B}_{e,s} \models \bigdoublewedge_j \forall \bar{y}_{i,j} \; \neg \varphi^e_{i,j}(\bar{b},\bar{y}_{i,j})$. This is evidence that $\mc{B}_e \nmodels \theta_e$. At stage $s+1$ instead of taking our usual action towards making $\mc{B}_e$ different from $\mc{A}$, we instead revert some of our previous actions to make $\mc{B}_e$ look like $\mc{A}$. If $\mc{B}_e \nmodels \theta_e$ then we will get more and more evidence of this, and we will instead have $\mc{B}_e \cong \mc{A}$ and thus $\mc{A} \nmodels \theta_e$. How we do this is the heart of the construction.

\medskip

\noindent \textit{Construction.}

\medskip

\noindent \textit{Stage 0.} We begin with $\extent = \extent[0] = 1$ and $\mc{A}_0$ consisting of a single element $a_1$, and $\mc{B}_{e,0}$ of a single element $c$. We give $a_1$ a label and give $c$ the same label.

\medskip

\noindent \textit{Stage $s+1$.} Suppose that we have constructed $\mc{A}_s$ and $\mc{B}_{e,s}$ with $\extent = \extent[s]$ and active elements $a_1,\ldots,a_\extent$. We first check whether any requirement $\mc{R}^{e}_{i,\bar{b}}$ requires attention.

Given $\mc{R}^{e}_{i,\bar{b}}$, let $t = t(\mc{R}^{e}_{i,\bar{b}})$ be the corresponding stage and let $k = k(\mc{R}^{e}_{i,\bar{b}})$. Then we say that $\mc{R}^{e}_{i,\bar{b}}$ \textit{requires attention} if \[\mc{B}_{e,s} \models \bigwedge_{j \leq \att} \forall \bar{y}_{i,j} \in \mc{B}_{e,\stage+\att} \;\; \neg \varphi^e_{i,j}(\bar{b},\bar{y}_{i,j}).\]
Note that this quantifier is not restricted to the $e$th sort of $\mc{B}_{e,\stage + \att}$ but is over the elements of all sorts. 

If no requirement requires attention, then define $\mc{A}_{s+1}$ as follows. Add a new element $a_{\extent+1}$, and give $a_{\extent+1}$ all of the labels that $a_\extent$ had in $\mc{A}_s$. Give each of $a_\extent$ and $a_{\extent+1}$ a new label unique only to it. Any duplicates of $a_\extent$ receive the same labels that $a_\extent$ did. For $\mc{B}_{e,s+1}$, add a new element $b_\extent$ and let $b_\extent$ have the same labels as $a_\extent$, and let $c$ have the same labels as $a_{\extent+1}$. There may be elements of $\mc{B}_{e,s}$ which look like duplicates of $c$; these are the elements which correspond to the duplicates of $a_\extent$. We call these \textit{pending duplicates} and we now give them the same labels as $b_\extent$, making them duplicates of $b_\extent$. Let $\mc{R}^{e}_{i,\bar{b}}$ be the next highest priority requirement; we may assume, by appropriately listing the requirements, that $\bar{b} \in \mc{B}_{e,s+1}$. We initialize $\mc{R}^{e}_{i,\bar{b}}$ by setting $\stage(\mc{R}^{e}_{i,\bar{b}}) = s+1$ and $\att(\mc{R}^{e}_{i,\bar{b}}) = 0$.

Otherwise, let $\mc{R}^{e}_{i,\bar{b}}$ be the highest priority requirement which requires attention, and we will act on the requirement as follows. Let $\stage = \stage(\mc{R}^{e}_{i,\bar{b}})$ be the corresponding stage, and $\extent^* = \extent[\stage]$ the corresponding value of $\extent$. Let $\extent[s+1] = \extent[\stage] = \extent^*$. In $\mc{A}_{s+1}$, declare $a_{\extent^*+1}[s],\ldots,a_{\extent}[s]$ to be duplicates of $a_{\extent^*}$; they give up these designations $a_{\extent^*+1},\ldots,a_{\extent}$. Moreover, any duplicates of these elements also become duplicates of $a_{\extent^*}$. Add labels to these elements and $a_{\extent^*}$ so that they have the same labels, i.e., for each label of one of these elements, give it to each other element. So in $\mc{A}_{s+1}$ we have active elements $a_1[s+1],\ldots,a_{\extent^*}[s+1]$ together with duplicates of these elements. In $\mc{B}_{e,s+1}$ add each label of any of $b_{\extent^*},\ldots,b_{\extent-1},c$ to each of the others. In $\mc{B}_{e,s+1}$ there are elements which correspond to the duplicates of $a_\extent[s+1]$. One should think of these not as duplicates of $c$, but as duplicates of an element $b_\extent$ which does not yet exist. We call these elements \textit{pending duplicates}. Then put $b_i[s+1] := b_i[s]$ for $i < \extent^*$. Thus, at stage $s+1$, the correspondence between $\mc{A}_{s+1}$ and $\mc{B}_{e,s+1}$ is $a_i \mapsto b_i$ for $i < \extent^*$ and $a_{\extent^*} \mapsto c$ (together with the duplicates mapping to corresponding duplicates, with duplicates of $a_\extent$ mapping to pending duplicates). Each lower priority $e$-requirement is injured, and increment $\att(\mc{R}^{e}_{i,\bar{a}})$ by $1$.

\medskip

\noindent \textit{End construction.}

\medskip

We must now verify that the construction works. Standard arguments show that for each $e$, there are two possibilities. First, it might be that some requirement $\mc{R}^e_{i,\bar{b}}$ is, after some point, never injured and acts infinitely often. In this case, let $
\stage = \stage(\mc{R}^e_{i,\bar{b}})$ and let $\extent = \extent[\stage]$. Since no higher priority requirement acts, the elements $a_1,\ldots,a_{\extent}$ never become duplicates of any other elements, and every time $\mc{R}^e_{i,\bar{b}}$ acts every other element which is not already becomes a duplicate of one of these elements. Thus the $e$th sort of $\mc{A}$ consists of $\extent$ elements $a_1,\ldots,a_{\extent}$ together with duplicates of them. We write $a_1[\infty],\ldots,a_{\extent}[\infty]$ for these elements.

The other possibility is that each requirement acts only finitely many times. If, after some stage $s$ with $\extent = \extent[s]$, no requirement with $\stage(\mc{R}^e_{i,\bar{b}}) < s$ ever acts, then after this stage $a_1,\ldots,a_\extent$ never become duplicates. In the limit, we build infinitely many elements $a_1[\infty],a_{2}[\infty],a_3[\infty],\ldots$. (Note that the first time we introduce some $a_i$, this element may become a duplicate; but there will be some point after which this stops happening.) All of the other elements will be duplicates of these.

\begin{lemma}
    $\mc{A}$ has a $\Pi_2$ Scott sentence.
\end{lemma}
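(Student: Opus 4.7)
The plan is to show that $\mc{A}$ is $\exists$-atomic, from which the existence of a $\Pi_2$ Scott sentence follows by \Cref{thm:robust}. Since $\mc{A}$ is built as a bouquet graph $\mc{G}_1(\mc{F})$, by Lemma 8.17 of \cite{Montalbán_2021} it suffices to check that the family $\mc{F}$ of label sets realized in $\mc{A}$ is discrete --- i.e., every $F \in \mc{F}$ is isolated from the other members of $\mc{F}$ by a single label. Because the construction allocates disjoint ranges of labels to different sorts $U_e$, I fix $e$ and work within one sort.

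The key invariant, which I will verify is maintained at every stage, is that every active element carries a fresh label that no other element of $\mc{A}_s$ possesses. Both kinds of steps preserve this: on a step with no requirement requiring attention, both $a_\extent$ and the newly added $a_{\extent+1}$ receive brand new unique labels (the new label of $a_\extent$ is added \emph{after} $a_{\extent+1}$ inherits, so it is not inherited by $a_{\extent+1}$); on an action step, the formerly unique labels of the absorbed elements $a_{\extent^*+1}, \ldots, a_\extent$ all consolidate onto $a_{\extent^*}$, while its own unique label is retained.

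To conclude discreteness I handle the two cases identified in the construction. In Case 2, each $a_i[\infty]$ eventually stops being the current end and is never again absorbed; the last fresh unique label it receives remains unique forever, because all later-created active elements inherit from successors of $a_i[\infty]$ rather than from $a_i[\infty]$ itself. In Case 1 with stage $\stage$ and $\extent^* = \extent[\stage]$: since no higher-priority requirement acts after $\stage$, the interior elements $a_1[\infty], \ldots, a_{\extent^*-1}[\infty]$ keep their stage-$\stage$ fresh labels; for the absorbing element $a_{\extent^*}[\infty]$, any single fresh unique label introduced to it after $\stage$ (for instance at the first subsequent growth step) is borne in the final structure only by $a_{\extent^*}[\infty]$ and its duplicates, because the temporary active elements $a_{\extent^*+1}, a_{\extent^*+2}, \ldots$ that briefly shared its descendants are all absorbed back into $a_{\extent^*}[\infty]$ and become duplicates of it.

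The main subtlety is precisely the last point in Case 1: at each new growth phase after a cycle of absorption, the fresh label that made $a_{\extent^*}$ uniquely distinguished during the previous phase gets copied to the newly introduced $a_{\extent^*+1}$, so one must verify that the \emph{next} fresh unique label (added after this new inheritance) is the distinguishing one, and that all transient carriers of earlier fresh labels end their lives as duplicates of $a_{\extent^*}[\infty]$. Once this bookkeeping is checked, every orbit in $U_e$ is isolated by a single label, so $\mc{F}$ is discrete, $\mc{A}$ is $\exists$-atomic, and $\mc{A}$ has a $\Pi_2$ Scott sentence.
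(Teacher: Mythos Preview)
Your proposal is correct and follows the same approach as the paper: reduce to discreteness of $\mc{F}$ via Lemma~8.17 of \cite{Montalbán_2021}, work within each sort, and exhibit for each $a_i[\infty]$ a label not carried by any other orbit. The only nitpick is that your invariant should read ``no other \emph{active} element'' rather than ``no other element of $\mc{A}_s$,'' since duplicates necessarily share all labels with the element they duplicate---but you clearly handle this correctly in the case analysis that follows.
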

\begin{proof}
Recall that by Lemma 8.17 of \cite{Montalbán_2021} the structure $\mc{G}_1(\mc{F})$ is $\exists$-atomic exactly when $\mc{F}$ is discrete. Therefore it suffices to show that for each element of $\mc{A}$, there is a finite set of labels on that element such that no other element has the same labels. Each element is in only one sort $U_e$, so it suffices to only compare elements within the same sort. Within each sort we will argue that each element has a label, that we call the distinguishing label, which it has and no other element has other than duplicates of that element.
    
    Each element $a_i[\infty]$, when it is created, is given a label which is never given to $a_1[\infty],\ldots,a_{i-1}[\infty]$. If $a_{i+1}[\infty]$ exists then, when it is created, $a_i[\infty]$ is given a label that is never given to $a_{i+1}[\infty]$; and no $a_j[\infty]$, $j > i+1$, is ever given that label. The other elements of $\mc{A}$ are duplicates of the $a_i[\infty]$ and hence share the same distinguishing label as the element they duplicate. 
\end{proof}

\begin{lemma}
    Fix $i$ and $\bar{b}$. Suppose that there is a stage $s$ after which $\mc{R}^e_{i,\bar{b}}$ is never injured. Let $\stage = \stage(\mc{R}^e_{i,\bar{b}})$. Suppose that 
    \[ \mc{B}_e \models \bigdoublewedge_j \forall \bar{y}_{i,j} \;\; \neg \varphi_{i,j}(\bar{b},\bar{y}_{i,j}).\]
    Then there are infinitely many stages at which $\mc{R}^e_{i,\bar{b}}$ requires attention.
\end{lemma}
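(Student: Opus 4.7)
The plan is to argue by contradiction: suppose $\mc{R}^e_{i,\bar{b}}$ requires attention only finitely often, and let $s^*$ be a stage beyond the last such one. Because the requirement is never injured after stage $s$, and its counter $\att(\mc{R}^e_{i,\bar{b}})$ changes only when the requirement acts, and the requirement acts only at stages at which it requires attention, the values $k_0 := \att(\mc{R}^e_{i,\bar{b}})$ and $\stage := \stage(\mc{R}^e_{i,\bar{b}})$ are both constant from $s^*$ onward. Thus for every $s' > s^*$ the condition for $\mc{R}^e_{i,\bar{b}}$ to require attention simplifies to the fixed finite conjunction
\[ \mc{B}_{e,s'} \models \bigdoublewedge_{j \le k_0} \forall \bar{y}_{i,j} \in \mc{B}_{e, \stage+k_0} \;\; \neg \varphi^e_{i,j}(\bar{b},\bar{y}_{i,j}), \]
which by assumption fails at every $s' > s^*$.

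The contradiction will come from showing this displayed condition \emph{must} hold at some $s' > s^*$. Each $\neg \varphi^e_{i,j}$ is a computable $\Sigma_1$ formula (since $\varphi^e_{i,j}$ is $\Pi_1$), and for each of the finitely many pairs $(j, \bar{y}_{i,j})$ with $j \le k_0$ and $\bar{y}_{i,j} \in \mc{B}_{e, \stage+k_0}$, the hypothesis gives $\mc{B}_e \models \neg \varphi^e_{i,j}(\bar{b},\bar{y}_{i,j})$. So each such $\Sigma_1$ fact is witnessed by a finite tuple enumerated into $\mc{B}_{e,s_0}$ at some finite stage $s_0$; taking $s'$ larger than the maximum of these finitely many witness-appearance stages and larger than $s^*$ makes every conjunct hold in $\mc{B}_{e,s'}$, contradicting the assumption.

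The main delicate point, and the key fact underlying this argument, is the monotonicity of the atomic diagram of $\mc{B}_e$: no element is ever removed, and labels are only added (never stripped), even during the rollback actions where active elements get reclassified as duplicates with additional labels added to equalize them. Granted this monotonicity, computable $\Sigma_1$ facts true in $\mc{B}_e$ reflect down to some finite $\mc{B}_{e,s'}$ and persist thereafter, which is exactly what the argument above needs. Once that verification is in hand, the contradiction above is immediate.
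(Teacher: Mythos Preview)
Your proof is correct and follows essentially the same idea as the paper's: since each $\varphi_{i,j}$ is computable $\Pi_1$, its negation is $\Sigma_1$, so truth of $\neg\varphi_{i,j}(\bar b,\bar y)$ in $\mc{B}_e$ is witnessed at some finite stage and persists thereafter (using the monotonicity of the label structure, which you rightly flag). The only difference is presentational: the paper argues directly, building an increasing sequence of stages $s_0 < s_1 < \cdots$ at which the requirement requires attention (with $k$ incrementing each time), whereas you argue by contradiction, freezing $k$ at some final value $k_0$ and showing the attention condition must eventually hold. Both routes rest on the same $\Sigma_1$-reflection point, and your emphasis on the monotonicity of the atomic diagram is a useful clarification that the paper leaves implicit.
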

\begin{proof}
    Given $s_0 = s$, we will argue that there are infinitely many stages $s_u$ at which $\mc{R}^e_{i,\bar{b}}$ requires attention. Given $s_u$, we have that 
    \[ \mc{B}_e \models \bigdoublewedge_j \forall \bar{y}_{i,j} \;\; \neg \varphi_{i,j}(\bar{b},\bar{y}_{i,j}).\]
    and so there must be some stage $s_{u+1} > s_u$ such that
    \[\mc{B}_{e,s_{u+1}} \models \bigwedge_{j \leq u+1} \forall \bar{y}_{i,j} \in B_{e,s_u} \;\; \neg  \varphi_{i,j}(\bar{b},\bar{y}_{i,j}).\]
    At this stage $s_{u+1}$, $\mc{R}^e_{i,\bar{b}}$ requires attention. (Since $\mc{R}^e_{i,\bar{b}}$ is not injured, no higher priority requirement can require attention at this stage.)
\end{proof}

\begin{lemma}
    Suppose that $\mc{B}_e \models \neg \theta_e$. Then $\mc{A} \cong \mc{B}_e$ and $\mc{A} \models \neg \theta_e$
\end{lemma}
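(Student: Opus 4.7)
The plan is to use a standard finite-injury priority argument. Assuming $\mc{B}_e \models \neg \theta_e$, I would fix a pair $(i^*, \bar{b}^*)$ with $\bar{b}^* \in \mc{B}_e$ and
\[\mc{B}_e \models \bigdoublewedge_j \forall \bar{y}_{i^*,j}\; \neg \varphi_{i^*,j}(\bar{b}^*, \bar{y}_{i^*,j}),\]
chosen so that the requirement $\mc{R}^e_{i^*,\bar{b}^*}$ has highest priority among all such witnesses.

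First I would show that we fall into the first of the two possibilities in the verification, namely that some requirement acts infinitely often. If not, every requirement acts only finitely often, so each is eventually non-injured, and the previous lemma applied to $(i^*,\bar{b}^*)$ shows that $\mc{R}^e_{i^*,\bar{b}^*}$ requires attention infinitely often. Since only finitely many higher-priority requirements exist and all act only finitely often, from some stage onward $\mc{R}^e_{i^*,\bar{b}^*}$ is the highest-priority requirement requiring attention whenever it does so, and it would therefore act infinitely often, contradicting the case assumption. A nearly identical priority argument, combined with the upward $\Sigma_1$-absoluteness of each finitary attention condition $\mc{B}_{e,s} \models \bigwedge_{j\leq\att} \forall \bar{y}_{i,j} \in \mc{B}_{e,\stage+\att}\, \neg \varphi_{i,j}(\bar{b},\bar{y}_{i,j})$ (which, as $\att \to \infty$, exhibits any infinitely-acting requirement as itself a witness to $\mc{B}_e \nmodels \theta_e$), identifies the highest-priority requirement acting infinitely often as exactly $\mc{R}^e_{i^*, \bar{b}^*}$.

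With $\stage^* = \stage(\mc{R}^e_{i^*, \bar{b}^*})$ and $\extent^* = \extent[\stage^*]$, the case-(i) analysis already recorded in the verification determines the limit shape of $\mc{A}$ on its $e$th sort: stable active elements $a_1[\infty], \ldots, a_{\extent^*}[\infty]$, each with its own distinguishing label, together with duplicates of $a_{\extent^*}[\infty]$ produced at each $\mc{R}^e_{i^*,\bar{b}^*}$-action. Because the stage-by-stage isomorphism $\mc{A}_s \cong \mc{B}_{e,s}$ (sending $a_i \mapsto b_i$ for $i < \extent[s]$ and $a_{\extent[s]} \mapsto c$) is maintained throughout the construction, corresponding elements always carry the same labels and the same numbers of duplicates; hence the limit shape of the $e$th sort of $\mc{B}_e$ is $b_1[\infty], \ldots, b_{\extent^*-1}[\infty], c$ with the same labels and cardinalities of duplicates as on the $\mc{A}$-side. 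So the $e$th sorts are isomorphic, and since $\mc{B}_e$ agrees with $\mc{A}$ on every other sort, $\mc{A} \cong \mc{B}_e$, and therefore $\mc{A} \models \neg \theta_e$.

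The main delicate point will be confirming that the stage-by-stage isomorphisms assemble into a genuine limit isomorphism: once $\mc{R}^e_{i^*,\bar{b}^*}$ stabilizes, every ``extra'' active element on each side must end up as a duplicate of the correct stable element at matching rates, and the labels accumulated on $a_i[\infty]$ must coincide with those accumulated on its $\mc{B}_e$-counterpart. This reduces to the rule that whenever $a_i$ receives a new label so do all its duplicates, together with the observation that the correspondence $\mc{A}_s \cong \mc{B}_{e,s}$ is respected at every merger performed during an action.
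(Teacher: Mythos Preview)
Your proposal is correct and follows essentially the same route as the paper: find a highest-priority requirement that acts infinitely often and use the coherent stage-by-stage isomorphisms $\mc{A}_{s}\cong\mc{B}_{e,s}$ at its action stages to assemble a limit isomorphism $\mc{A}\cong\mc{B}_e$. The one difference is that you take an extra detour to pin down this requirement as exactly your pre-chosen witness $\mc{R}^e_{i^*,\bar{b}^*}$, using that any infinitely-acting requirement is itself a witness via $\Sigma_1$-persistence of $\neg\varphi_{i,j}$; the paper instead simply works with whatever the highest-priority infinitely-acting requirement happens to be, since the isomorphism argument needs only that \emph{some} requirement acts infinitely often without injury, not which one. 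Your identification is correct but unnecessary for the conclusion.
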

\begin{proof}
    Then there is $i$ and $\bar{b} \in \mc{B}_e$ such that
    \[ \mc{B}_e \models \bigdoublewedge_j \forall \bar{y}_{i,j} \;\; \neg \varphi_{i,j}(\bar{b},\bar{y}_{i,j}).\]
    By the previous lemma, for any such $i$ and $\bar{b}$, if there is some stage after which it is not injured, the requirement $\mc{R}^e_{i,\bar{b}}$ will require attention infinitely many times. Thus in particular there is some highest priority requirement $\mc{R}^e_{i,\bar{b}}$ which requires attention infinitely often.

    Let $\stage = \stage(\mc{R}^e_{i,\bar{b}})$. Let $\extent = \extent[\stage]$. Let $s_0$ be the stage at which $\mc{R}^e_{i,\bar{b}}$ is initiated, and let $s_1,s_2,\ldots$ be the stages at which it requires attention. At each of these stages, $\mc{A}_{s_i}$ consists of $a_1,\ldots,a_e$ and duplicates, and $\mc{B}_{e,s_i}$ consists of $b_1,\ldots,b_{n-1},c$ and duplicates; thus the isomorphisms  $a_1,\ldots,a_e \mapsto b_1,\ldots,b_{n-1},c$ at these stages extend to an isomorphism $\mc{A} \cong \mc{B}_e$. (For this, we also need to know that corresponding elements have the same number of duplicates; this is maintained at each stage of the construction.) Thus $\mc{A} \cong \mc{B}_e$, and since $\mc{B}_e \models \neg \theta_e$ we have $\mc{A} \models \neg \theta_e$.
\end{proof}

As a result, if $\mc{A} \models \theta_e$ then $\mc{B}_e \models \theta_e$. Now we show that if $\mc{B}_e \models \theta_e$ then $\mc{A} \ncong \mc{B}_e$.

\begin{lemma}
    Suppose that $\mc{B}_e \models \theta_e$. Then $\mc{A} \ncong \mc{B}_e$.
\end{lemma}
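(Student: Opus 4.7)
The plan is to show that $\mc{B}_e \models \theta_e$ forces the construction into its ``generic'' long-term behaviour, so that $\mc{B}_e$ contains a distinguished element $c$ carrying infinitely many distinct labels while no element of $\mc{A}$ does, yielding $\mc{A} \ncong \mc{B}_e$.

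First I would show that every requirement $\mc{R}^e_{i,\bar b}$ acts only finitely many times. The condition for $\mc{R}^e_{i,\bar b}$ requiring attention at stage $s$,
\[ \mc{B}_{e,s} \models \bigwedge_{j \leq \att} \forall \bar y_{i,j} \in \mc{B}_{e,\stage+\att} \;\; \neg \varphi_{i,j}(\bar b,\bar y_{i,j}), \]
is $\Sigma_1$, and $\Sigma_1$ satisfaction is monotone along the construction, since neither generic steps (which add fresh vertices and loops) nor action steps (which pool labels among $a_{\extent^*},\ldots,a_\extent$ and among $b_{\extent^*},\ldots,b_{\extent-1},c$ by adjoining further loops) ever remove any edge between already-existing vertices of the bouquet graph. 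Hence if $\mc{R}^e_{i,\bar b}$ is eventually never injured and still acts infinitely often, then $\stage$ is fixed thereafter, $\att$ grows unboundedly, and $\mc{B}_{e,\stage+\att}$ exhausts $\mc{B}_e$, so monotonicity forces $\mc{B}_e \models \bigdoublewedge_j \forall \bar y_{i,j}\;\neg\varphi_{i,j}(\bar b,\bar y_{i,j})$, contradicting $\mc{B}_e \models \theta_e$. The standard priority induction --- the top-priority requirement is never injured and so acts only finitely often, after it ceases to act the next is eventually never injured, and so on --- then shows every requirement acts only finitely many times.

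Consequently the construction falls into the second case flagged after the construction, so each $a_i$ stabilises as some $a_i[\infty]$ and $\mc{A}$ has infinitely many stable elements in its $e$th sort. Each $a_i[\infty]$ can acquire labels only when it is introduced as $a_{\extent+1}$, when it is $a_\extent$ at the subsequent generic stage, and through actions with parameter $\extent^* = i$; the latter involves only the finitely many requirements initialised at stages $\stage$ with $\extent[\stage] = i$, each acting only finitely often, so $a_i[\infty]$ ends with a finite label set. By contrast, $c \in \mc{B}_e$ is given the labels of $a_{\extent+1}$ at every generic stage and thereby picks up the freshly assigned unique label of that $a_{\extent+1}$; these labels are pairwise distinct by construction, and there must be infinitely many generic stages (otherwise only finitely many requirements are ever initialised and the construction halts after finitely many stages), so $c$ carries infinitely many labels. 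Thus $\mc{B}_e$ has an element with infinitely many labels whereas $\mc{A}$ does not, and $\mc{A} \ncong \mc{B}_e$. I expect the subtle step to be the monotonicity argument: one must confirm that neither the loops added at generic stages nor the label-pooling at action stages invalidates any atomic fact participating in a finite-stage witness to $\neg\varphi_{i,j}$, so that the finite-stage $\Sigma_1$ snapshots accumulate into a genuine $\Sigma_1$ statement holding in the limit $\mc{B}_e$.
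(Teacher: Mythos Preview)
Your proof is correct and follows essentially the same line as the paper's: you establish by priority induction that every requirement acts only finitely often (using that $\neg\varphi_{i,j}$ is $\Sigma_1$ and the bouquet graphs only grow, exactly as the paper implicitly uses), and then conclude $\mc{A} \ncong \mc{B}_e$; where the paper simply asserts that $c$ ``does not correspond to any element of $\mc{A}$,'' you flesh this out via the label-count argument, which is a perfectly good way to make that step precise. Your worry about monotonicity is unfounded---since $\varphi_{i,j}$ is $\Pi_1$, its negation is existential and persists along the chain of substructures---and your bound on the labels of $a_i[\infty]$ can be stated more cleanly by noting that once $a_{i+1}[\infty]$ is stable no action with $\extent^* \leq i$ ever occurs again, so $a_i[\infty]$ receives no further labels.
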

\begin{proof}
        We will argue that each requirement, after it is initialized and unless it is injured, requires attention only finitely many times. We argue by induction; given a requirement $\mc{R}^e_{i,\bar{b}}$, suppose that the higher priority requirements only require attention finitely many times. Then there is a stage $s_0$ after which $\mc{R}^e_{i,\bar{b}}$ is never injured. Let $\stage = \stage(\mc{R}^e_{i,\bar{b}})$. 
          Whenever $\mc{R}^e_{i,\bar{b}}$ acts for the $\att$th time at stage $s+1 $, it is because
    \[\mc{B}_{e,s} \models \bigwedge_{j \leq \att} \forall \bar{y}_{i,j} \in \mc{B}_{e,\stage+\att} \;\; \neg \varphi_{i,j}(\bar{b},\bar{y}_{i,j}).\]
    Then if $\mc{R}^e_{i,\bar{b}}$ requires attention infinitely many times, we see that $\mc{B}_{e} \models \bigdoublewedge_j \forall \bar{y}_{i,j} \; \neg \varphi_{i,j}(\bar{b},\bar{y}_{i,j})$ and so $\mc{B}_e \models \neg \theta_e$, contradicting the hypotheses of this lemma. Thus each requirement requires attention only finitely many times.

    In particular, there are infinitely many ``true expansionary stages'' and $\mc{A} \ncong \mc{B}_e$ because the element $c$ in $\mc{B}_e$ does not correspond to any element of $\mc{A}$.
\end{proof}

We have proved that if $\mc{A} \models \theta_e$ then $\mc{B}_e \models \theta_e$ and $\mc{A} \ncong \mc{B}_e$. Thus $\theta_e$ cannot be a Scott sentence for $\mc{A}$. It follows that, since the $\theta_e$ list all computable $\Pi_3$ sentences, $\mc{A}$ cannot have a computable $\Pi_3$ Scott sentence.
\end{proof}

\section{There is no characterization of the structures with a computable $\Pi_2$ Scott sentence}

Given that we know that some computable structures with $\Pi_2$ Scott sentences have no computable $\Pi_2$ Scott sentences (or even computable $\Pi_3$ Scott sentences), we might like to know how to decide whether or not some particular structure with a computable $\Pi_2$ Scott sentence has a computable $\Pi_2$ Scott sentence.

Having a $\Pi_2$ Scott sentence is equivalent to being $\exists$-atomic, which means that it has a Scott family of existential formulas. This is equivalent (see \cite{MonEffAtomicStructures}) to the following statement: for each tuple $\bar{a} \in \mc{A}$ there is an existential sentence $\varphi(\bar{x})$ such that for any universal sentence $\psi(\bar{x})$ with $\mc{A} \models \psi(\bar{x})$, $\mc{A} \models \forall \bar{x} \; \varphi(\bar{x}) \rightarrow \psi(\bar{x})$. Thus it is not too difficult to tell whether a structure has a $\Pi_2$ Scott sentence: $\Pi^0_4$ if one counts the quantifiers (though we leave it as an open question to determine if this is best possible).

Since it is not too hard to decide whether a computable structure $\mc{A}$ has a $\Pi_2$ Scott sentence, we might as well assume that our given structure $\mc{A}$ does. One possible attempt to characterize whether $\mc{A}$ has a computable $\Pi_2$ Scott sentence is to ask whether it is effectively $\exists$-atomic, that is, whether it has a c.e.\ Scott family of existential formulas. Alvir, Knight, and McCoy \cite{AlvirKnightMcCoy} proved that if a computable structure $\mc{A}$ has a c.e.\ Scott family of existential formulas, then it has a computable $\Pi_2$ Scott sentence. However the converse fails (and while our Theorem \ref{thm:complete} gives a counterexample it is not too hard to construct a one directly).

In this section we will prove that there is no characterization of the structures with a computable $\Pi_2$ Scott sentence. We will use the technique of index set complexity which has been used to great effect in computable structure theory, as suggested in \cite{GoncharovKnight}, e.g., to show that there is no characterization of when a computable structure has a decidable or automatic presentation \cite{HTdec,BHKMN}. Given a listing of all (partial) computable structures in a given langauge, we consider the set of (indices for) structures with the particular property we are interested in. In this case, we consider the set
\[ \{i \mid \text{$\mc{A}_i$ has a computable $\Pi_2$ Scott sentence}\}.\]
This set is naively $\Pi^1_1$, because $\mc{A}_i$ has a computable $\Pi_2$ Scott sentence if and only if (a) $\mc{A}$ has a $\Pi_2$ Scott sentence and (b) there is a computable $\Pi_2$ sentence $\varphi$ such that for all countable structures $\mc{B}$, $\mc{B} \models \varphi$ if and only if $\mc{A} \equiv_2 \mc{B}$. Note that give (a), if $\mc{A} \equiv_2 \mc{B}$ then $\mc{A} \cong \mc{B}$. While (a) is $\Pi^0_4$, (b) is $\Pi^1_1$. If there was a better characterization of when a structure had a computable Scott sentence, then this index set would be simpler than $\Pi^1_1$, e.g., hyperarithmetic. However we show that the set is $\Pi^1_1$-$m$-complete, and hence there is no simpler characterization.

\complete*

\begin{proof}
    By general results on the universality of certain languages up to effective bi-interpretability, we can construct examples in whatever language we wish to. We explained above why the class is $\Pi^1_1$, and it remains to show that it is $\Pi^1_1$-$m$-complete. 
    Let $T \subseteq \omega^{< \omega}$ be a tree. We will define a computable structure $\mc{A} = \mc{A}_T$ which is $\exists$-atomic and hence has a $\Pi_2$ Scott sentence, and such that $\mc{A}$ has a computable $\Pi_2$ Scott sentence if and only if $T$ is well-founded.

    We list all of the computable $\Pi_2$ sentences as $(\theta_e)_{e \in \omega}$ where
    \[ \theta_e = \bigdoublewedge_{i \in \omega} \forall \bar{x}_{e,i} \varphi_{e,i}(\bar{x}_{e,i}) \]
    where the $\varphi_{e,i}(\bar{x}_{e,i})$ are computable $\Sigma_1$ formulas uniformly in $e,i$ and the arities of each of the $\bar{x}_{e,i}$ are also computable in the indices.

    $\mc{A}$ will be $\exists$-atomic and hence have a $\Pi_2$ Scott sentence. It will consist of a number of elements each of which is given various c.e.\ labels. As before, we can take $\mc{A}$ to be the ``bouquet graph'' $\mc{G}_1(\mc{F})$ of a collection $\mc{F}$ of subsets of $\omega$. We introduce three sets of labels. First, we have \textit{sort labels} $(u_e)_{e \in \omega}$ such that only exactly one label holds of each element, dividing the domain into the disjoint sets $U_e = \{ x \in A : u_e(x) \}$; we think of these as different sorts of the structure, and call the elements of $U_e$ the \textit{$e$th sort}. Then we will have two set of labels $(\ell_\sigma)_{\sigma \in \omega^{< \omega}}$ and $(\ell^\dagger_\sigma)_{\sigma \in \omega^{< \omega}}$ we will just call \textit{labels}.

    Within $U_e$ we will diagonalize against $\theta_e$, though this diagonalization may only be successful if $T$ has an infinite path. Given a path $\pi$ through $T$, we will diagonalize by constructing another countable structure $\mc{B}_e = \mc{B}_{e,\pi}$ such that if $\mc{A} \models \theta_e$ then $\mc{B}_e \models \theta_e$ and $\mc{A} \ncong \mc{B}_e$. At each stage $s$ we will have an approximation $\mc{A}_s$ to $\mc{A}$, with $\mc{A} = \bigcup_s \mc{A}_s$. $\mc{B}_e$ will differ from $\mc{A}$ only on the $e$th sort $U_e$. $\mc{B}_e$ will also be built by approximations $\mc{B}_e =\bigcup_s \mc{B}_{e,s}$ with each $\mc{B}_{e,s} \cong \mc{A}_s$, though the construction of $\mc{B}_e$ is non-computable as it requires knowing a path through $T$.

    When we describe the construction of $\mc{A} = \bigcup \mc{A}_s$, we will describe the construction of the $e$th sort. The constructions for the different sorts should be thought of as happening simultaneously.

    During the construction certain stages will be \textit{$e$-expansionary stages} where we get evidence that $\mc{A} \models \theta_e$. We use the variable $k[s]$ to keep track of the number of expansionary stages. The elements of $\mc{A}$ will all be of the form $a_\sigma$ for some $\sigma \in T$. We write $T_s$ for the set of $\sigma$ such that $a_\sigma \in \mc{A}_s$, i.e., for the set of $\sigma$ which correspond to elements of $\mc{A}_s$ at stage $s$. $T_s$ will be a subtree of $T$, with the property that if one child of $\sigma \in T_s$ is in $T_s$, then all children of $\sigma$ are in $T_s$.
    
    \medskip

    \noindent \textit{Construction of the $e$th sort of $\mc{A}$.}

    \medskip

    \noindent \textit{Stage 0.} We begin with $\mc{A}_0$ consisting of a single element $a_\varnothing$ with the single label $\ell_\varnothing$. Thus $T_0 = \{\varnothing\}$. Begin with $k = 0$ as we have not yet had any expansionary stages.

    \medskip

    \noindent \textit{Stage $s+1$.} Suppose that we have constructed $\mc{A}_s$ with $k = k[s]$. We say that a tuple $\bar{x}$ is $k$-small it consists of elements $a_\sigma \in \mc{A}$ of the $e$th sort with $\sigma \in \{0,\ldots,k\}^{\leq k}$ and elements not of the $e$th sort but among the first $k$ elements of $\mc{A}$. First, we check whether this is an expansionary stage\footnote{The conditions to have an expansionary stage, and exactly which elements we should add to $\mc{A}_{s+1}$ at an expansionary stage, are quite subtle. We must ensure that we can perform the construction of $\mc{B}_e$ given below and prove Lemmas \ref{lem:bunion} and Lemma \ref{lem:bsat}.}: Check whether, for each $i \leq k$ and each $k$-small tuple $\bar{x}_i$ we have
    \[ \mc{A}_{s} \models \varphi_i(\bar{x}_i) \]
    where the existential quantifier in $\varphi_i(\bar{x}_i)$ is witnessed by one of the first $s$ witnesses (being careful to use an appropriately dovetailed listing of possible witnesses).
    If this is the case, then stage $s+1$ is an expansionary stage, and set $k[s+1] = k[s]+1$. Otherwise, set $k[s+1] = k[s] + 1$.

    If stage $s+1$ is not an expansionary stage, set $\mc{A}_{s+1} = \mc{A}_s$. If stage $s+1$ is an expansionary stage:
    \begin{enumerate}
        \item for each $\sigma \in T_s \cap \{0,\ldots,k\}^{\leq k}$ with a child $\tau$ in $T \cap \{0,\ldots,k\}^{\leq k}$, put the label $\ell^\dagger_\sigma$ on $a_\sigma$.
        \item for each $\sigma \in T_s \cap \{0,\ldots,k\}^{\leq k}$ which is a leaf of $T_s$ and has a child $\tau$ in $T \cap \{0,\ldots,k\}^{\leq k}$, and each child $\tau \in T$ of $\sigma$, add a new element $a_\tau$ with the labels $\ell_\rho$ for $\rho \preceq \tau$. Let $T_{s+1}$ be $T_s$ together with all of these new $\tau$.
    \end{enumerate}

\medskip

\noindent \textit{End construction.}

\medskip

What $\mc{A}$ looks like depends on whether there are finitely many or infinitely many expansionary stages.
\begin{enumerate}
    \item If there are finitely many expansionary stages, say $k$, then $\mc{A}$ has finitely many elements $a_\sigma$ with each $|\sigma| \leq k$. Write $T_\infty$ for the set of such $\sigma$; $T_\infty = \bigcup T_s$ is a subtree of $T$. Each $a_\sigma$ has the labels $\ell_\rho$ for $\rho \preceq \sigma$; and if $\sigma$ is not a leaf of $T_\infty$, then $a_\sigma$ also has the label $\ell^\dagger_\sigma$.
    \item If there are infinitely many expansionary stages, then $\mc{A}$ has elements $a_\sigma$ for $\sigma \in T$, and each $a_\sigma$ has the labels $\ell_\rho$ for $\rho \preceq \sigma$ and the label $\ell^\dagger_\sigma$. (We have $T_\infty = \bigcup T_s = T$.)
\end{enumerate}
Thus it is easy to see that $\mc{A}$ has a $\Pi_2$ Scott sentence.

\begin{lemma}\label{lem:pi2}
    $\mc{A}$ has a $\Pi_2$ Scott sentence.
\end{lemma}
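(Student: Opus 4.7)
The plan is to invoke Lemma 8.17 of \cite{Montalbán_2021}, which states that the bouquet graph $\mc{G}_1(\mc{F})$ is $\exists$-atomic (and hence carries a $\Pi_2$ Scott sentence) exactly when the family $\mc{F}$ of label-sets is discrete, i.e., each element admits a finite set of labels that no other element carries. Because the sort labels $u_e$ partition the universe into disjoint sorts $U_e$, it suffices to produce, within each $U_e$ separately, a single distinguishing label for every $a_\sigma$.

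Fix a sort $U_e$, and split into two cases according to whether there are finitely or infinitely many $e$-expansionary stages. In the finite case, $T_\infty = \bigcup_s T_s$ is a finite subtree of $T$ and the elements of $U_e$ are exactly the $a_\sigma$ for $\sigma \in T_\infty$. If $\sigma$ is a non-leaf of $T_\infty$, then some child of $\sigma$ was added at an expansionary stage, and by clause (1) of the expansionary action the label $\ell^\dagger_\sigma$ was placed on $a_\sigma$ at that stage; since $\ell^\dagger_\sigma$ is indexed by $\sigma$ alone, no other element ever receives it. If $\sigma$ is a leaf of $T_\infty$, then the label $\ell_\sigma$ is placed on $a_\tau$ only when $\tau \succeq \sigma$, and since no such $\tau \neq \sigma$ lies in $T_\infty$, the label $\ell_\sigma$ is unique to $a_\sigma$.

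In the infinite-expansionary case, every $\sigma \in T$ eventually enters $T_\infty = T$. If $\sigma$ has a child in $T$, then at the first expansionary stage at which a child of $\sigma$ enters the $k$-bounded region the label $\ell^\dagger_\sigma$ is placed on $a_\sigma$, and again on no other element. If $\sigma$ is a true leaf of $T$, then by the same downward-closure argument as above $\ell_\sigma$ is unique to $a_\sigma$. In either regime, each $a_\sigma$ carries a distinguishing label, so $\mc{F}$ is discrete and $\mc{A}$ has a $\Pi_2$ Scott sentence.

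The main care to take is in the leaf subcases, where the dagger label is absent; one must verify that $\ell_\sigma$ is only ever placed on elements $a_\tau$ with $\tau \succeq \sigma$, and that no such proper extension lies in $T_\infty$ when $\sigma$ is a leaf. Otherwise the argument is a direct inspection of the construction, with no duplicate-element bookkeeping required (unlike the warmup construction in the previous section).
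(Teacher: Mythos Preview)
Your proof is correct and follows essentially the same approach as the paper: both arguments verify discreteness of the label family by showing that each $a_\sigma$ carries a distinguishing label, namely $\ell^\dagger_\sigma$ when $\sigma$ is a non-leaf of $T_\infty$ and $\ell_\sigma$ when $\sigma$ is a leaf. Your version is more detailed---explicitly invoking Lemma~8.17 of \cite{Montalbán_2021} and splitting into the finite/infinite expansionary cases---whereas the paper compresses this into two lines, but the content is the same.
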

\begin{proof}
    For each $a_\sigma$, either $a_\sigma$ has the label $\ell^\dagger_\sigma$ and no other element has this label, or $\sigma$ is a leaf of $T_\infty$ and $a_\sigma$ is the unique element with the label $\ell_\sigma$.
\end{proof}

\begin{lemma}\label{lem:wf}
    Suppose that $T$ is well-founded. Then $\mc{A}$ has a computable $\Pi_2$ Scott sentence.
\end{lemma}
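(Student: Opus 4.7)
The plan is to write down, for each well-founded computable $T$, an explicit computable $\Pi_2$ sentence $\psi_T$ and verify it is a Scott sentence for $\mc A_T$. Two structural observations drive the construction. First, step 2 of an expansionary stage guarantees that whenever $\sigma \in T_{\infty,e}$ has \emph{any} child in $T_{\infty,e}$, it has \emph{all} its $T$-children in $T_{\infty,e}$, and $a_\sigma$ then carries the label $\ell_\sigma^\dagger$; so each $T_{\infty,e}$ is a ``closed'' subtree of $T$ in which $\ell_\sigma^\dagger$ exactly marks the non-leaves. Second, well-foundedness of $T$ makes every chain in $T$ finite, so every element of any model of our axioms will have a well-defined maximum ``endpoint'' in $T$.

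I would then take $\psi_T = \psi_0 \wedge \psi_1$ where $\psi_0$ is a uniform computable $\Pi_2$ conjunction axiomatizing: the sort labels $u_e$ partition the universe; each element's $\ell$-labels form a downward-closed chain in $T$; $\ell_\sigma^\dagger$ implies $\ell_\sigma$ and rules out any $\ell_\tau$ for $\tau \succ \sigma$; each sort has an element satisfying $\ell_\varnothing$; for each sort $e$ and each child $\tau$ of $\sigma$ in $T$, the existence of $\ell_\sigma^\dagger$ somewhere in sort $e$ is equivalent to the existence of $\ell_\tau$ somewhere in sort $e$; $\ell_\sigma^\dagger$ is realized at most once per sort; and at most one element of each sort has $\sigma$ as its endpoint (the ``no label $\ell_\tau$ for $\tau \succ \sigma$'' clause being the computable $\Pi_1$ conjunction $\bigwedge_{\tau \in T,\, \tau \succ \sigma} \neg \ell_\tau(x)$, legitimate since $T$ is computable). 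Under $\psi_0$ any model $\mc B$ has, in each sort $e$, a set of endpoints $T_B^e \subseteq T$ that forms the same kind of closed subtree of $T$ as $T_{\infty,e}$.

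The non-uniform part $\psi_1$ then pins down \emph{which} closed subtree: for sorts with $\mc A_T \models \theta_e$ (Case A, so $T_{\infty,e} = T$) I add $\bigwedge_{\sigma \in T} \exists x\,(u_e(x) \wedge \ell_\sigma(x))$, forcing $T_B^e = T$; for sorts with $\mc A_T \not\models \theta_e$ (Case B, so $T_{\infty,e}$ is a specific finite closed subtree of $T$) I add a finite existence conjunction together with the $\Pi_1$ conjunction $\bigwedge_{\sigma \in T \setminus T_{\infty,e}} \forall x\, \neg(u_e(x) \wedge \ell_\sigma(x))$, forcing $T_B^e = T_{\infty,e}$. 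Both are computable $\Pi_2$ as written. The partition of sorts into Case A/Case B is only $\Pi^0_2$ in $T$, but this is fine: we need $\psi_T$ to be computable, not computable uniformly in $T$.

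The main obstacle is the verification that $\psi_T$ is a Scott sentence. For any $\mc B \models \psi_T$, the uniqueness axioms in $\psi_0$ combined with the existence/absence axioms in $\psi_1$ give, in each sort $e$, a bijection between $\mc B$'s elements and $T_{\infty,e}$ via the endpoint map, and the closure and dagger axioms in $\psi_0$ ensure this bijection preserves all labels, so that $\mc B \cong \mc A_T$ sort-by-sort. Well-foundedness of $T$ is essential throughout for guaranteeing that every element of $\mc B$ has a well-defined endpoint in $T$ -- without it, $\mc B$ could contain elements whose label-chains in $T$ fail to have a maximum.
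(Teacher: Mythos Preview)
There is a genuine gap in the computability of your sentence $\psi_1$. You need, for each sort $e$, to know whether that sort falls under Case A or Case B, i.e.\ whether there are infinitely many $e$-expansionary stages. This is a $\Pi^0_2$ question in $e$, and since $\mc{A}_T$ has infinitely many sorts, $\psi_1$ is an infinite conjunction whose $e$th block changes shape according to a $\Pi^0_2$ predicate of $e$. That is not a computable infinitary sentence. Your remark that ``we need $\psi_T$ to be computable, not computable uniformly in $T$'' misidentifies the obstruction: $T$ is a fixed computable tree, and the non-uniformity you actually invoke is in the sort index $e$, not in $T$. (A secondary symptom: in Case B your absence clause ranges over $T\setminus T_{\infty,e}$, which is only co-c.e.; you only know it is cofinite after you already know you are in Case B and what $T_{\infty,e}$ is.)

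The repair is to avoid the case split entirely and instead exploit that $T_{\infty,e}$ and the set of its non-leaves are c.e.\ \emph{uniformly in $e$}, since the whole stagewise construction of $\mc{A}_T$ is computable. Then the existence axioms become the c.e.\ conjunction $\bigdoublewedge_{\sigma\in T_{\infty,e}} \exists x\,(u_e(x)\wedge \ell_\sigma(x))$, and the absence axioms become the single schema $\forall x\,[\,u_e(x)\wedge \ell_\sigma(x)\to \sigma\in T_{\infty,e}\,]$, which is $\Pi_2$ because membership in $T_{\infty,e}$ is $\Sigma^0_1$. With this change your $\psi_0$ (which is fine as written) together with these uniform clauses does give a computable $\Pi_2$ Scott sentence; this is exactly the route the paper takes.
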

\begin{proof}
    Note that $T_\infty = \bigcup T_s$ is a c.e.\ set. Moreover, the set of non-leaves of $T_\infty$ is also a c.e.\ set. Consider the computable $\Pi_2$ sentence which says that:
    \begin{enumerate}
        \item if $x$ has the label $\ell_\sigma$ then it also has all of the labels $\ell_\rho$ for $\rho \prec \sigma$.
        \item if $x$ has the label $\ell^\dagger_\sigma$ then it has the label $\ell_\sigma$ and does not have any label $\ell_\tau$ for $\tau \succ \sigma$.
        \item no $x$ has labels $\ell_{\sigma_1}$ and $\ell_{\sigma_2}$ for incompatible $\sigma_1$ and $\sigma_2$.
        \item if $x$ has the label $\ell_\sigma$ then $\sigma \in T_\infty$.
        \item every $x$ has the label $\ell_\varnothing$ and if $x$ has the label $\ell_\sigma$ and $\sigma$ is not a leaf of $T_\infty$ then $x$ has either the label $\ell^\dagger_\sigma$ or some label $\ell_\tau$ where $\tau$ is a child of $\sigma$ on $T$.
        \item if $x$ has the label $\ell^\dagger_\sigma$, then $\sigma$ is a non-leaf of $T_\infty$.
        \item for each $\sigma \in T_\infty$, there is some $x$ with the label $\ell_\sigma$.
        \item for each $\sigma \in T_\infty$ which is not a leaf, there is some $x$ with the label $\ell^\dagger_\sigma$.
        \item for every distinct $x$ and $y$, either there are some incompatible $\sigma,\tau$ such that one element has the label $\ell_\sigma$ and the other has the label $\ell_\tau$, or there is some $\sigma$ and a child $\sigma^*$ of $\sigma$ such that one element has the label $\ell^\dagger_\sigma$ and the other has the label $\ell_{\sigma^*}$.
    \end{enumerate}
    Since $T$ is well-founded, $T_\infty$ must also be well-founded. We can check that this sentence is true of $\mc{A}$.

    Suppose that a structure $\mc{C}$ satisfies this sentence. For each non-leaf $\sigma$ of $T_\infty$, by (8) there is an element $c_\sigma$ with the label $\ell^\dagger_\sigma$ (and thus, by (1) and (2) also the labels $\ell_\rho$ for $\rho \preceq \sigma$, and by (2) and (3) these are the only labels). By (9) and (2) no other element of $\mc{C}$ has the label $\ell^\dagger_\sigma$.


    For each leaf $\sigma$ of $T_\infty$ by (7) there is an element $c_\sigma$ with the label $\ell_\sigma$. By (1) it also has all of the labels $\ell_\rho$ for $\rho \prec \sigma$, and by (2), (3), and (6) it has no other labels. Thus we have shown that $\mc{A}$ embeds as a substructure of $\mc{C}$.

    Suppose that $d$ is some other element of $\mc{C}$, not included among the $c_\sigma$ above. By (5) $d$ has the label $\ell_\varnothing$. Let $\sigma_0 = \varnothing$. We argue inductively as follows. Given $\sigma_i$ such that $d$ has the label $\ell_{\sigma_i}$, by (5) either (a) $\sigma_i$ is a leaf of $T_\infty$, (b) $d$ has the label $k_{\sigma_i}$, or (c) $d$ has some label $\ell_{\sigma_{i+1}}$ for some child $\sigma_{i+1} \in T_\infty$ of $\sigma_i$. Since $T_\infty$ is well-founded, we can repeat this process to build a sequence of children $\sigma_0 = \varnothing, \sigma_1,\ldots,\sigma_m$ in $T_\infty$ with either (a) $\sigma_m$ is a leaf of $T_\infty$, or (b) $d$ has the label $k_{\sigma_m}$. Recall that there is already an element $c_{\sigma_m}$ with the label $k_{\sigma_m}$, and by (2) and (9) there are no other such elements. Thus $\sigma_m$ is a leaf of $T_\infty$ and $d$ has the label $\ell_{\sigma_m}$. But we already have an element $c_{\sigma_m}$ with the label $\ell_{\sigma_m}$, and by (9) we cannot have two such elements. So no such $d$ exists, that is, $\mc{C} \cong \mc{A}$.
\end{proof}

\begin{lemma}\label{lem:infexp}
    Suppose that $\mc{A} \models \theta_e$. Then there are infinitely many expansionary stages.
\end{lemma}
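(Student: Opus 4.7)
The plan is an argument by contradiction of the usual expansionary-stage type. Assume only finitely many stages are $e$-expansionary. Since $k[s]$ is incremented only at expansionary stages (otherwise $k[s+1]=k[s]$), the counter stabilizes at some value $k^{\ast}$ after some stage $s^{\ast}$. For every $s \geq s^{\ast}$ stage $s+1$ fails the expansionary test, so there must exist some $i \leq k^{\ast}$ and some $k^{\ast}$-small tuple $\bar{x}_i$ such that $\mc{A}_s$ does not yet witness $\varphi_{e,i}(\bar{x}_i)$ using any of the first $s$ listed witnesses.

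Next I would pigeonhole. Once $k^{\ast}$ and $\mc{A}$ are fixed there are only finitely many candidate pairs $(i,\bar{x}_i)$: the constraint $\sigma \in \{0,\ldots,k^{\ast}\}^{\leq k^{\ast}}$ confines the $e$-sort coordinates to a fixed finite pool, the coordinates outside the $e$th sort lie among the first $k^{\ast}$ non-$e$-sort elements of $\mc{A}$, and $i$ ranges over $\{0,\ldots,k^{\ast}\}$. Hence some single pair $(i,\bar{x}_i)$ witnesses failure for infinitely many $s \geq s^{\ast}$. The final step invokes that $\varphi_{e,i}$ is $\Sigma_1$ together with the hypothesis $\mc{A} \models \theta_e$: writing $\varphi_{e,i}(\bar{x}) = \exists \bar{y}\, \psi_{e,i}(\bar{x},\bar{y})$ with $\psi_{e,i}$ quantifier-free computable, fix a concrete witness $\bar{y} \in \mc{A}$. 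Since $\mc{A} = \bigcup_s \mc{A}_s$ is a computable construction and the dovetailed enumeration of candidate witnesses is exhaustive, for all sufficiently large $s$ the tuples $\bar{x}_i$ and $\bar{y}$ lie in $\mc{A}_s$, the quantifier-free formula $\psi_{e,i}(\bar{x}_i,\bar{y})$ is already confirmed in $\mc{A}_s$, and $\bar{y}$ appears among the first $s$ candidate witnesses. Thus for all large enough $s$ the test $\mc{A}_s \models \varphi_{e,i}(\bar{x}_i)$ succeeds with a first-$s$ witness, contradicting the pigeonhole conclusion that failure recurs infinitely often.

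The main obstacle, modest as it is, is purely bookkeeping: one must check that the definition of ``$k^{\ast}$-small tuple'' and the ``first $s$ witnesses'' quantifier bound interact correctly with the fact that elements of $\mc{A}$ are added to $\mc{A}_s$ only gradually. This is handled by observing that once $k^{\ast}$ is fixed the collection of $k^{\ast}$-small tuples actually present in $\mc{A}$ is finite and is entirely present in $\mc{A}_s$ after finitely many stages, so the $\Sigma_1$ upward absoluteness argument applies uniformly to each candidate pair.
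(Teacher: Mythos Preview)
Your argument is correct and rests on the same two observations as the paper's proof: there are only finitely many $k^\ast$-small tuples, and each computable $\Sigma_1$ formula $\varphi_{e,i}(\bar{x}_i)$ true in $\mc{A}$ is eventually witnessed in $\mc{A}_s$ among the first $s$ candidates. The paper frames this directly rather than by contradiction---given $k$ expansionary stages it simply picks a single stage $s_{k+1}$ large enough that every one of the finitely many $k$-small instances is already witnessed---so your pigeonhole step (isolating a single recurring bad pair) is an avoidable detour, but the content is the same.
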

\begin{proof}
    Let $s_1 < s_2 < s_3 < \cdots < s_k$ be expansionary stages. We must argue that there will later be another expansionary stage, the $k+1$st. For each $i \leq k$ and $k$-small tuple $\bar{x}_i$ we have
    \[ \mc{A} \models \varphi_i(\bar{x}_i).\]
    Since $\varphi_i$ is existential, and $\mc{A} = \bigcup_s \mc{A}_s$, there must be some stage $s_{k+1} > s_k$ such that for each such $i$ and $\bar{x}_i$ we have
    \[ \mc{A}_{s_{k+1}} \models \varphi_i(\bar{x}_i)\]
    with the existential quantifier in $\varphi_i$ witnessed at this stage.
    This stage $s_{k+1}$ is expansionary.
\end{proof}

Suppose that $T$ has an infinite path $\pi$. Let $\mc{B}_e$ be $\mc{A}$ together with another element $c$ of the $e$th sort satisfying $\ell_{\rho}$ for each $\rho \prec \pi$. Clearly $\mc{B}_e \ncong \mc{A}$. However we also want to know that if $\mc{A} \models \theta_e$, then $\mc{B}_e \models \theta_e$. To see this, we will show that $\mc{B}_e$ is also a union $\mc{B}_e = \bigcup \mc{B}_{e,s}$ and that $\mc{B}_{e,s} \cong \mc{A}_s$. Thus one can think of $\mc{A}$ and $\mc{B}_e$ as direct limits of direct systems of the same structures but with different embeddings
\[ \mc{A}_0 \to \mc{A}_1 \to \mc{A}_2 \to \cdots .\]
However the definition of $\mc{B}_{e,s}$ depends on the path $\pi$ and thus is non-computable.

At stage $s$, let $\pi_s$ be the longest initial segment of $\pi$ which on $T_s$. Then $\pi_s$ is a leaf of $T_s$---this is because $T_s$ has the property that if there is any child of $\pi_s$ in $T_s$, then all children of $\pi_s$ from $T$ are in $T_s$. $\mc{B}_{e,s}$ will have domain consisting of elements $\{b_\sigma \mid \sigma \in T_s, \sigma \neq \pi_s\} \cup \{ c \}$. We put the same labels on $b_\sigma$ as on $a_\sigma$, and the same labels on $c$ as on $a_{\pi_s}$. Note that since $\pi_s$ is on the infinite path $\pi$, there cannot be a label $k_{\pi_s}$ on $c$ and $a_{\pi_s}$. We must check that $\mc{B}_e$ is in fact the union of these $\mc{B}_{e,s}$.

\begin{lemma}\label{lem:bunion}
     If there are infinitely many expansionary stages, then $\lim_s \pi_s = \pi$, and so $\mc{B}_{n} = \bigcup_s \mc{B}_{e,s}$.
\end{lemma}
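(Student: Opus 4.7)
The plan is to prove the two assertions in sequence. First, I would show that $|\pi_s| \to \infty$ (and since each $\pi_s$ is an initial segment of $\pi$, this gives $\lim_s \pi_s = \pi$). Then I would verify that every element of $\mc{B}_e$ eventually appears in some $\mc{B}_{e,s}$ with the correct labels.

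For the first part, the key observation is that, since there are infinitely many expansionary stages, the counter $k[s]$ tends to infinity. Fix $n$; I want to show that $|\pi_s| \geq n$ for all sufficiently large $s$. Choose $s_0$ large enough that $k[s_0] \geq n$ and $k[s_0] \geq \max\{\pi(i) : i < n\}$; then for every $s \geq s_0$ the initial segment $\pi \restriction n$ lies in $\{0,\ldots,k[s]\}^{\leq k[s]}$. I would then argue: at any expansionary stage $s+1 \geq s_0$ with $|\pi_s| < n$, the string $\pi_s$ is a leaf of $T_s$ (by definition of $\pi_s$), lies in $\{0,\ldots,k[s]\}^{\leq k[s]}$, and has a child on $T$ namely $\pi\restriction(|\pi_s|+1)$ which is also in $\{0,\ldots,k[s]\}^{\leq k[s]}$. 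Hence the construction at step (2) of the expansionary stage adds all children of $\pi_s$ to $T_{s+1}$, in particular $\pi\restriction(|\pi_s|+1)$, giving $|\pi_{s+1}| \geq |\pi_s|+1$. Iterating, after at most $n - |\pi_{s_0}|$ further expansionary stages (of which infinitely many exist by hypothesis) we reach $|\pi_s| \geq n$.

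For the second part, I need to check that as $s \to \infty$ the finite structures $\mc{B}_{e,s}$ exhaust $\mc{B}_e$. Every $\sigma \in T_\infty$ other than those lying strictly along $\pi$ eventually belongs to $T_s$ and is distinct from $\pi_s$ (since $\pi_s$ lies on $\pi$), so $b_\sigma$ appears in $\mc{B}_{e,s}$ with the labels $\ell_\rho$ for $\rho \preceq \sigma$ together with $\ell^\dagger_\sigma$ when $\sigma$ becomes a non-leaf, matching the labels of $a_\sigma$. For $\sigma = \pi \restriction m$ a proper initial segment of $\pi$, part (1) gives that eventually $|\pi_s| > m$, so at that point $\sigma \in T_s$ and $\sigma \neq \pi_s$ and $b_\sigma$ enters $\mc{B}_{e,s}$ (once again with matching labels). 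Finally the distinguished element $c$ has, at stage $s$, exactly the labels of $a_{\pi_s}$ at stage $s$, namely $\{\ell_\rho : \rho \preceq \pi_s\}$; since $\pi_s \to \pi$, in the limit $c$ carries precisely the labels $\{\ell_\rho : \rho \prec \pi\}$, which is exactly how $c$ was defined in $\mc{B}_e$.

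The main subtlety — and essentially the only nontrivial step — is the length-extension argument in the first paragraph, which relies on the careful design of the $k$-smallness bookkeeping in the construction to guarantee that, whenever an expansionary stage occurs and the relevant initial segment of $\pi$ fits inside $\{0,\ldots,k\}^{\leq k}$, the construction is forced to extend $T_s$ one step further along $\pi$. Everything else is a routine check that the labels on $b_\sigma$ and on $c$ stabilize to those prescribed in the definition of $\mc{B}_e$, using monotonicity of labels (labels of $a_{\pi_{s+1}}$ at stage $s+1$ include the labels of $a_{\pi_s}$ at stage $s$, since both are of the form $\ell_\rho$ for $\rho \preceq \pi_{s+1}$ or $\rho \preceq \pi_s$ respectively, and $\pi_s \preceq \pi_{s+1}$).
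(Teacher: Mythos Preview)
Your proposal is correct and follows essentially the same argument as the paper. The paper proves $\lim_s \pi_s = \pi$ by contradiction (assuming $\pi_s$ stabilizes and then exhibiting an expansionary stage that forces an extension), while you argue directly that $|\pi_s|$ is eventually pushed past any given $n$; the core mechanism---that once $k[s]$ is large enough the leaf $\pi_s$ satisfies the hypothesis of step~(2) and so acquires its child along $\pi$---is identical. Your second paragraph spells out the ``and so $\mc{B}_e = \bigcup_s \mc{B}_{e,s}$'' that the paper leaves implicit, and your check that $c$ never acquires a label $\ell^\dagger_{\pi_s}$ (because steps~(1) and~(2) fire under the same condition on a leaf) is the right observation for label monotonicity.
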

\begin{proof}
    Suppose not, so that for some $t$ for all $s \geq t$ we have $\pi_s = \pi_t$. But then there is $k$ such that $\pi \restriction_{|\pi_{t}|+1}  \in \{0,\ldots,k\}^{\leq k}$, and at some stage $s \geq t$ there is a $k'$th expansionary stage for some $k' \geq k$. At this stage, we add the children of $\pi_t$ on $T$ to $T_{s+1}$, so that $\pi_{t+1}$ is one of these children and strictly extends $\pi_t$.
\end{proof}

Now we can show that $\mc{B}_e \models \theta_e$.

\begin{lemma}\label{lem:bsat}
    Suppose that $T$ has an infinite path. If there are infinitely many expansionary stages, then $\mc{B}_e \models \theta_e$.
\end{lemma}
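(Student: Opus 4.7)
The plan is to show, for each $i$ and each tuple $\bar{y}$ in $\mc{B}_e$ of the appropriate arity, that $\mc{B}_e \models \varphi_i(\bar{y})$. The idea is to locate an expansionary stage $s+1$ at which the construction has explicitly verified $\mc{A}_s \models \varphi_i(\bar{x})$, where $\bar{x}$ is the image of $\bar{y}$ under the isomorphism $\mc{B}_{e,s} \cong \mc{A}_s$, and then transfer back through the isomorphism. Since $\varphi_i$ is $\Sigma_1$ and $\mc{B}_{e,s}$ sits inside $\mc{B}_e$ as an initial segment of the direct system (in the underlying bouquet-graph presentation we only ever add vertices and edges), any such witnessed existential statement at stage $s$ lifts to $\mc{B}_e$.

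To carry this out, first choose $k \geq i$ large enough that every $b_\sigma$ occurring in $\bar{y}$ has $\sigma \in \{0,\ldots,k\}^{\leq k}$ and every non-$e$-sort entry of $\bar{y}$ is among the first $k$ elements of $\mc{A}$. Using the hypothesis that there are infinitely many expansionary stages, pick an expansionary stage $s+1$ with $k[s] \geq k$ and (by Lemma \ref{lem:bunion}) with $\bar{y} \subseteq \mc{B}_{e,s}$. Under the isomorphism, $\bar{y}$ corresponds to a tuple $\bar{x}$ in $\mc{A}_s$ where any occurrence of $c$ maps to $a_{\pi_s}$ and each $b_\sigma$ maps to $a_\sigma$.

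The crux is the claim that $\bar{x}$ is $k[s]$-small. For the fixed entries $a_\sigma$ and for non-$e$-sort entries this is immediate from the choice of $k \leq k[s]$. The only subtle entry is $a_{\pi_s}$: by induction on the construction, $\pi_s$ is extended by at most one symbol per expansionary stage, and any such new symbol comes from adding a child $\tau \in T \cap \{0,\ldots,k[s']\}^{\leq k[s']}$ at some earlier expansionary stage $s'+1$, hence its value is bounded by $k[s'] \leq k[s]$; moreover $|\pi_s|$ is bounded by the number of expansionary stages up to $s$, which is at most $k[s]$. So $\pi_s \in \{0,\ldots,k[s]\}^{\leq k[s]}$. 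Once this is in hand, the fact that stage $s+1$ is expansionary with $i \leq k \leq k[s]$ directly gives $\mc{A}_s \models \varphi_i(\bar{x})$ with its existential witness already present in $\mc{A}_s$; the isomorphism gives $\mc{B}_{e,s} \models \varphi_i(\bar{y})$ with the corresponding witness in $\mc{B}_{e,s}$; and upward preservation of $\Sigma_1$ then yields $\mc{B}_e \models \varphi_i(\bar{y})$. The main obstacle is exactly the bookkeeping that $\pi_s$ lies in $\{0,\ldots,k[s]\}^{\leq k[s]}$; this is precisely what the definition of ``$k$-small'' and the rule ``add only children $\tau \in T \cap \{0,\ldots,k\}^{\leq k}$ at an expansionary stage with $k[s]=k$'' were engineered to guarantee.
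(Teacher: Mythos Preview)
Your overall plan is right and matches the paper's: pick a stage, pull $\bar{y}$ across the stage-$s$ isomorphism to a tuple $\bar{x}$ in $\mc{A}_s$, use an expansionary stage to witness $\varphi_i(\bar{x})$, and push back. The gap is in your ``crux'' paragraph, specifically the claim that the entries of $\pi_s$ are bounded by $k[s]$. You write that ``any such new symbol comes from adding a child $\tau \in T \cap \{0,\ldots,k[s']\}^{\leq k[s']}$,'' but that is not what the construction does. Reread item (2): for a leaf $\sigma$ in the box which \emph{has some} child in $T\cap\{0,\ldots,k\}^{\leq k}$, the construction adds \emph{every} child $\tau\in T$ of $\sigma$, with no bound on the last entry of $\tau$. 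So the new last coordinate of $\pi_{s+1}$ can be arbitrarily large, and in general $\pi_s\notin\{0,\ldots,k[s]\}^{\leq k[s]}$. Your tuple $\bar{x}$ is then not $k[s]$-small, and the expansionary check at stage $s+1$ says nothing about it.

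The repair is to choose the stage more carefully. One clean way: restrict to expansionary stages $s+1$ at which $\pi_s$ is about to be extended. At such a stage the very condition for extending forces $\pi_s\in T_s\cap\{0,\ldots,k[s]\}^{\leq k[s]}$, and by Lemma~\ref{lem:bunion} there are infinitely many such stages, so you can also take $k[s]\geq k$ and $\bar{y}\subseteq\mc{B}_{e,s}$. The paper's version is essentially the same manoeuvre phrased as a two-step choice: fix $s$ (and hence $\pi_s$), then pass to the first later expansionary stage whose $k'$ is large enough that $\pi_s\in\{0,\ldots,k'\}^{\leq k'}$; since $\pi$ cannot move until $\pi_s$ enters the box, the isomorphism at that later stage still sends $c\mapsto a_{\pi_s}$, and now $\bar{x}$ is $k'$-small so the expansionary check applies. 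Either route closes the gap; what you have now does not.
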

\begin{proof}
    Fix $i$ and $\bar{y}_i \in \mc{B}_e$. Fix some stage $s$ sufficiently large such that (a) by stage $s$ there have been $k$ expansionary stages, (b) for each $b_\sigma$ in $\bar{y}_i$ of the $e$th sort, $\sigma \in \{0,\ldots,k\}^{\leq k}$ and $|\sigma| < |\pi_s|$, and (c) each element of $\bar{y}_i$ not in the $e$th sort is among the first $k$ elements of $\mc{A}$ (recalling that $\mc{B}_e$ is the same as $\mc{A}$ on these other sorts). Let $\bar{x}_i$ be $\bar{y}_i$ except that each $b_\sigma$ in $\bar{y}_i$ is replaced by $a_\sigma$, and $c$ is replaced by $a_{\pi_s}$, and elements not of the $e$ sort are kept the same. Then at some stage $s' \geq s$ there is for the first time 
 a $k'$th expansionary stage with $\pi_s \in \{0, \ldots, k' \}^{\leq k'}$. Since $\bar{x}_i$ is $k$-small we have $\mc{A}_s \models \varphi_i(\bar{x}_i)$. Since $\mc{B}_{e,s} \cong \mc{A}_{s}$ via an isomorphism mapping $b_\sigma \mapsto a_\sigma$ and $c \mapsto a_{\pi_s}$ (and hence $\bar{y}_i \mapsto \bar{x}_i$), we have  $\mc{B}_{e,s} \models \varphi_i(\bar{y}_i)$. Since $\varphi_i$ is $\Sigma_1$, $\mc{B}_e \models \varphi_i(\bar{y}_i)$. Thus, since this is true for all $i$ and $\bar{y}_i \in \mc{B}_e$, $\mc{B}_e \models \theta_e$.
\end{proof}

We have shown in Lemma \ref{lem:pi2} that $\mc{A}$ has a $\Pi_2$ Scott sentence. If $T$ is well-founded, then Lemma \ref{lem:wf} says that $\mc{A}$ has a computable $\Pi_2$ Scott sentence. Otherwise, if $T$ has an infinite path, we argue that $\mc{A}$ has no computable $\Pi_2$ Scott sentence. If it did, say $\theta_e$, then we have $\mc{A} \models \theta_e$. Then by Lemma \ref{lem:infexp} there are infinitely many $e$-expansionary stages. We construct the structure $\mc{B}_{e,s} \ncong \mc{A}$ using a path through $T$, and by Lemma \ref{lem:bsat} we have that $\mc{B}_e \models \theta_e$. Thus in fact $\theta_e$ cannot have been a Scott sentence for $\mc{A}$. Thus if $T$ has an infinite path, we argue that $\mc{A}$ has no computable $\Pi_2$ Scott sentence.
\end{proof}

\section{Corollaries and other commentary}\label{sec:cor}

\subsection{Complexity of Scott families}

In \cite{AlvirKnightMcCoy} it was shown that if a computable structure has a c.e.\ Scott family of computable $\Sigma_\alpha$ formulas then the structure has a computable $\Pi_{\alpha + 1}$ Scott sentence. (Note that it is easy to construct uncountably many structures with a c.e.\ Scott family of $\Sigma_1$ formulas, so it is necessary here that the structure is computable.) On the other hand, in the same paper it was shown that if a structure has a computable $\Pi_{\alpha + 1}$ Scott sentence then it has a Scott family of computable $\Sigma_\alpha$ formulas, but as the following theorem shows the Scott family is not necessarily c.e.

\begin{corollary}
    There is a computable structure with a computable $\Pi_2$ Scott sentence but with no c.e.\ Scott family of computable $\Sigma_1$ formulas.
\end{corollary}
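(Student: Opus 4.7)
The plan is to deduce this corollary from Theorem~\ref{thm:complete} by a complexity comparison; no further construction is needed. Write $I_1$ for the index set of computable structures with a computable $\Pi_2$ Scott sentence, and $I_2$ for the index set of those with a c.e.\ Scott family of computable $\Sigma_1$ formulas. The Alvir--Knight--McCoy implication quoted above gives $I_2 \subseteq I_1$. Theorem~\ref{thm:complete} shows $I_1$ is $\Pi^1_1$-$m$-complete, hence not $\Sigma^1_1$ (were it $\Sigma^1_1$, it would be $\Delta^1_1$, and then every $\Pi^1_1$ set would be hyperarithmetic, a contradiction). I will show that $I_2$ is $\Sigma^1_1$; this forces $I_2 \subsetneq I_1$, and any index in the difference supplies a computable structure of the required kind.

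To bound the complexity of $I_2$, fix an effective enumeration $(\varphi_n)_{n \in \omega}$ of all computable $\Sigma_1$ formulas. Then $i \in I_2$ iff there exists $e$ such that (i) every tuple in $\mc{A}_i$ satisfies some $\varphi_n$ with $n \in W_e$, and (ii) for each $n \in W_e$ any two tuples satisfying $\varphi_n$ lie in the same automorphism orbit of $\mc{A}_i$. Satisfaction of a computable $\Sigma_1$ formula in a computable structure is $\Sigma^0_1$, making (i) a $\Pi^0_2$ condition. The orbit relation is $\Sigma^1_1$ (witnessed by an automorphism), so (ii) is a universal number-quantification of an implication whose hypothesis is $\Sigma^0_1$ and whose conclusion is $\Sigma^1_1$, which is again $\Sigma^1_1$. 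Their conjunction is $\Sigma^1_1$, and the outer existential number-quantifier over $e$ leaves us in $\Sigma^1_1$ as required.

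The resulting contradiction with $\Pi^1_1$-$m$-completeness immediately yields $I_2 \subsetneq I_1$. There is no real technical obstacle once Theorem~\ref{thm:complete} is in hand; the only small bookkeeping point is to verify that the reduction in the proof of Theorem~\ref{thm:complete} produces total computable structures (it does: $\mc{A}_T$ is total computable whenever the tree $T$ is computable), so that the $\Pi^1_1$-$m$-completeness of $I_1$ persists when we restrict to indices of total computable structures, as the corollary demands.
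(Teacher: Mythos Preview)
Your argument is correct and follows essentially the same route as the paper: both proofs observe that $I_2 \subseteq I_1$ by Alvir--Knight--McCoy, that $I_1$ is $\Pi^1_1$-$m$-complete by Theorem~\ref{thm:complete}, and that $I_2$ lies in a class strictly below $\Pi^1_1$, forcing the inclusion to be proper. The only difference is quantitative: the paper asserts the sharper bound $I_2 \in \Sigma^0_5$ (without spelling out the count), whereas you prove the weaker bound $I_2 \in \Sigma^1_1$; either suffices for the corollary.
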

\begin{proof}
    Any computable structure with a c.e.\ Scott family of computable $\Sigma_1$ formulas has a computable $\Pi_2$ Scott sentence. However the index set of computable structures with a c.e.\ Scott family of computable $\Sigma_1$ formulas is $\Sigma^0_5$ 
    while the index set of computable structures with a computable $\Pi_2$ Scott sentence is $\Pi^1_1$-$m$-complete. Thus there must be a computable structure with a computable $\Pi_2$ Scott sentence but with no c.e.\ Scott family of computable $\Sigma_1$ formulas.
\end{proof}

Despite this, one can ask how bad the Scott family for such a structure must be. Note that by Theorem \ref{thm:pi3} these give necessary but not sufficient conditions to have a computable $\Pi_2$ Scott sentence.

\begin{proposition}
\label{cebound}
    Suppose $A$ is computable and has a computable $\Pi_{\alpha + 1}$ Scott sentence. Then $A$ has a c.e.\ Scott family of computable $\Pi_{\alpha + 1}$ formulas. 
\end{proposition}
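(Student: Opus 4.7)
My plan is to bootstrap from the Alvir--Knight--McCoy theorem already mentioned in this section: since $\mc{A}$ has a computable $\Pi_{\alpha+1}$ Scott sentence $\varphi$, it has a Scott family $\{\theta_{\bar{a}}\}_{\bar{a}}$ of computable $\Sigma_\alpha$ formulas, each $\theta_{\bar{a}}(\bar{x})$ defining the orbit of $\bar{a}$ in $\mc{A}$. This family need not be c.e., since the map $\bar{a} \mapsto \theta_{\bar{a}}$ can be non-computable. The trick will be to recover uniform c.e.-ness at the cost of moving one syntactic level up to $\Pi_{\alpha+1}$, by enumerating \emph{all} computable $\Sigma_\alpha$ formulas and conjoining each with $\varphi$.

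First I will fix an effective enumeration $(\chi_e)_{e \in \omega}$ of all computable $\Sigma_\alpha$ formulas $\chi_e(\bar{x})$ (organised by arity in the usual way) and set $\psi_e(\bar{x}) := \varphi \wedge \chi_e(\bar{x})$. Since $\varphi$ is computable $\Pi_{\alpha+1}$ and $\chi_e$ is computable $\Sigma_\alpha$---hence also computable $\Pi_{\alpha+1}$ by padding with empty universal quantifier blocks---each $\psi_e$ will be a computable $\Pi_{\alpha+1}$ formula, uniformly in $e$, making $\{\psi_e\}_{e \in \omega}$ a uniformly c.e.\ family of computable $\Pi_{\alpha+1}$ formulas.

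Next I will verify that $\{\psi_e\}_e$ is a Scott family for $\mc{A}$. For each orbit $O$, pick $\bar{a} \in O$; the formula $\theta_{\bar{a}}$ provided by Alvir--Knight--McCoy appears in the enumeration as some $\chi_{e_0}$, so $\psi_{e_0} = \varphi \wedge \theta_{\bar{a}}$ lies in the family and defines $O$ in $\mc{A}$: clearly $\mc{A} \models \psi_{e_0}(\bar{a})$, and if $\mc{A} \models \psi_{e_0}(\bar{b})$ then $\mc{A} \models \theta_{\bar{a}}(\bar{b})$, forcing $\bar{b} \in O$.

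The only real subtle point, and the step I expect to be the obstacle, is that $\{\psi_e\}$ will typically also contain ``extraneous'' formulas $\varphi \wedge \chi_e$ where $\chi_e$ fails to isolate a single orbit of $\mc{A}$. Under the convention---standard in this line of work---that a Scott family is any set of formulas containing, for every orbit, at least one defining formula, this causes no difficulty and the argument goes through. If one insists that every formula in the family isolate exactly one orbit, one must restrict to those $\psi_e$ that do so, and this restriction is not obviously c.e.; however, the looser convention is implicit in how the AKM direction is invoked elsewhere in this section, so I expect it to be the intended reading here.
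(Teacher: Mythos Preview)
Your argument has a genuine gap, and the ``looser convention'' you appeal to cannot be the one in force here. Under your reading---a Scott family is any set of formulas containing at least one orbit-defining formula for each orbit---the notion of a \emph{c.e.}\ Scott family of computable $\Sigma_\alpha$ formulas becomes vacuous: for any structure with a (non-effective) $\Sigma_\alpha$ Scott family, simply enumerating \emph{all} computable $\Sigma_\alpha$ formulas already gives a c.e.\ one. But the Corollary immediately preceding this Proposition exhibits a computable $\exists$-atomic structure with \emph{no} c.e.\ Scott family of computable $\Sigma_1$ formulas, so the paper is using the standard Ash--Knight definition, under which tuples satisfying the same formula of the family must lie in the same orbit. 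Your family $\{\varphi\wedge\chi_e\}_e$ violates this: since $\varphi$ is a sentence true in $\mc{A}$, the conjunction $\varphi\wedge\chi_e$ has the same realizations in $\mc{A}$ as $\chi_e$ itself, so you have not filtered anything and the bad $\chi_e$'s remain. (In particular, conjoining with $\varphi$ is a red herring; every computable $\Sigma_\alpha$ formula is already computable $\Pi_{\alpha+1}$.)

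The paper's proof avoids this by working tuple-by-tuple rather than formula-by-formula. Given $\bar{a}$, it takes the conjunction of \emph{all} computable $\Pi_\alpha$ formulas true of $\bar{a}$ in $\mc{A}$. This conjunction genuinely isolates the orbit of $\bar{a}$: if $\bar{b}$ lies in a different orbit, then the negation of the $\Sigma_\alpha$ formula isolating the orbit of $\bar{b}$ is a computable $\Pi_\alpha$ formula true of $\bar{a}$ but false of $\bar{b}$. The index set for this conjunction is $\Pi^0_\alpha$ uniformly in $\bar{a}$ (as $\mc{A}$ is computable), so by the Ash--Knight normal-form result the conjunction is equivalent to a single computable $\Pi_{\alpha+1}$ formula whose index is computable from $\bar{a}$. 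This gives a computable map $\bar{a}\mapsto\psi_{\bar{a}}$ with each $\psi_{\bar{a}}$ defining exactly the orbit of $\bar{a}$, which is a c.e.\ Scott family in the required sense.
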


\begin{proof}
We will describe a uniform procedure that given $\bar{a} \in A$ produces a computable $\Pi_{\alpha + 1}$ formula defining the automorphism orbit of $\bar{a}$. We know that each of these automorphism orbits is definable by a computable $\Sigma_\alpha$ formula, and so it is enough to find a formula that supports the computable $\Pi_{\alpha}$ type of $\bar{a}$. It is $\Pi_{\alpha}$ to list all the computable $\Pi_{\alpha}$ formulas true of $\bar{a}$. Taking the conjunction of all such formulas we get a formula that supports the type and thus defines the automorphism orbit of $\bar{a}$. This is a conjunction of a $\Pi_{\alpha}$ set of $\Pi_{\alpha}$ formulas, which is also a conjunction of a $\Sigma_{\alpha + 1}$ set of $\Pi_{\alpha + 1}$ formulas, and so is equivalent to a computable $\Pi_{\alpha + 1}$ formula (see Proposition 7.12 of \cite{AshKnight00}). Given $\bar{a}$, we can find (an index for) the $\Pi_\alpha$ set of indices for the $\Pi_\alpha$ formulas we want to take the conjunction of, and so checking that Proposition 7.12 of \cite{AshKnight00} is uniform we can uniformly in $\bar{a}$ find an index for the computable $\Pi_{\alpha+1}$ formula equivalent to the conjunction. Thus we get a c.e.\ Scott family of computable $\Pi_{\alpha+1}$ formulas.
\end{proof}
\color{black}

\begin{proposition}
\label{sabound}
    Suppose $A$ is a computable structure with a computable $\Pi_{\alpha+1}$ Scott sentence. Then $A$ has a $\Sigma^0_{\alpha+2}$ Scott family of computable $\Sigma_\alpha$ formulas. 
\end{proposition}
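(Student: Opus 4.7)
The plan is to take as the Scott family the set of all indices of computable $\Sigma_\alpha$ formulas that happen to define the automorphism orbit of some tuple in $A$, and then just compute the complexity of this set. By Theorem \ref{thm:robust} together with the effective result of \cite{AlvirKnightMcCoy} already cited above, the hypothesis that $A$ has a computable $\Pi_{\alpha+1}$ Scott sentence implies that every automorphism orbit of $A$ is defined by at least one computable $\Sigma_\alpha$ formula. So the set
\[ \mc{F} = \{ e : \varphi_e \text{ is a computable $\Sigma_\alpha$ formula defining the orbit of some $\bar{a} \in A$}\}\]
is automatically a Scott family of computable $\Sigma_\alpha$ formulas, and the only remaining task is to bound its complexity.

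The key observation is that, since $A$ already has a Scott family of computable $\Sigma_\alpha$ formulas, two tuples of $A$ lie in the same automorphism orbit if and only if they satisfy the same computable $\Pi_\alpha$ formulas (one direction is automatic, and the other uses that the negation of a $\Sigma_\alpha$ formula defining the orbit of $\bar{a}$ is a computable $\Pi_\alpha$ formula distinguishing any $\bar{b}$ not in the orbit). Using this, ``$\varphi_e$ defines the orbit of $\bar{a}$'' can be rewritten as the conjunction of $A \models \varphi_e(\bar{a})$ together with
\[ \forall \bar{b} \in A \;\; \forall j \text{ an index of a computable $\Pi_\alpha$ formula } \psi_j : \;\; \bigl(A \models \psi_j(\bar{a}) \wedge A \models \varphi_e(\bar{b})\bigr) \to A \models \psi_j(\bar{b}).\]

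The complexity count is then routine: in a computable structure, satisfaction of a computable $\Sigma_\alpha$ formula is $\Sigma^0_\alpha$ and of a computable $\Pi_\alpha$ formula is $\Pi^0_\alpha$, so the inner implication is equivalent to the disjunction of a $\Sigma^0_\alpha$ statement and a $\Pi^0_\alpha$ statement, both of which sit inside $\Pi^0_{\alpha+1}$; the universal quantifier over $\bar{b}$ and $j$ preserves $\Pi^0_{\alpha+1}$; conjoining with the $\Sigma^0_\alpha$ condition $A \models \varphi_e(\bar{a})$ stays in $\Pi^0_{\alpha+1}$; and finally the outer $\exists \bar{a}$ brings us to $\Sigma^0_{\alpha+2}$. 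Intersecting with the (low-complexity) set of indices of computable $\Sigma_\alpha$ formulas does not raise this bound, so $\mc{F}$ is $\Sigma^0_{\alpha+2}$ as required.

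The main obstacle is purely the quantifier-counting: one has to be careful that using $\Pi_\alpha$-type equivalence (rather than $\Sigma_\alpha$-type equivalence) really does keep the inner matrix within $\Pi^0_{\alpha+1}$, and that the formulation is genuinely uniform in the index $j$ of $\psi_j$. Once this is set up correctly, no back-and-forth argument or priority construction is needed—the content is simply that existentially quantifying a tuple over a $\Pi^0_{\alpha+1}$ condition costs exactly one more level.
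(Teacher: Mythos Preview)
Your argument is correct and follows essentially the same approach as the paper: define a $\Pi^0_{\alpha+1}$ relation expressing that a computable $\Sigma_\alpha$ formula isolates the orbit of a particular tuple, then existentially project over the tuple to get $\Sigma^0_{\alpha+2}$. The only cosmetic difference is that the paper phrases the inner condition using computable $\Sigma_\alpha$ formulas (``every $\Sigma_\alpha$ formula true of $\bar a$ is true of $\bar b$'') rather than your dual $\Pi_\alpha$ version, but the complexity count and the underlying idea are identical.
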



\begin{proof}
    Since $A$ has a computable $\Pi_{\alpha + 1}$ Scott sentence, it has a Scott family of computable $\Sigma_\alpha$ formulas, and so for each tuple $\bar{a}$ there is a computable $\Sigma_{\alpha}$ formula defining the automorphism orbit of $\bar{a}$. Now consider the set of all pairs $(\bar{a},\psi)$ where $\bar{a}$ is a tuple from $\mc{A}$ and $\psi$ is a $\Sigma_{\alpha}$ formula such that
    \begin{enumerate}
        \item $\mc{A} \models \psi(\bar{a})$;
        \item for all $\bar{b}$, if $\mc{A} \models \psi(\bar{b})$ then for all computable $\Sigma_\alpha$ formulas $\theta(\bar{x})$, if $\mc{A} \models \theta(\bar{a})$ then $\mc{A} \models \theta(\bar{b})$.
    \end{enumerate}
    (1) is $\Sigma^0_{\alpha}$ while (2) is $\Pi^0_{\alpha+1}$. Thus this set is $\Pi^0_{\alpha+1}$. The projection of this set onto the second coordinate is a $\Sigma^0_{\alpha+2}$ Scott family.
\end{proof}

\begin{question}
    Let $A$ be a computable structure with a computable $\Pi_{\alpha + 1}$ Scott sentence. Are the bounds on the Scott family obtained in \Cref{cebound} and \Cref{sabound} best possible? 
\end{question}

\subsection{Pseudo-Scott sentences}

Recall that a pseudo-Scott sentence for a computable structure $\mc{A}$ is a computable sentence $\varphi$ such that for all \textit{computable} structure $\mc{B}$,
\[ \mc{B} \models \varphi \;\Longleftrightarrow\; \mc{A} \cong \mc{B}.\]
There are several examples of structures with a computable pseudo-Scott sentence of a certain complexity but no computable Scott sentence of that complexity \cite{HoDescribingGroups,QuinnEqStructures}. As a corollary we get a much stronger example.

\begin{corollary}
    There is a computable structure $\mc{A}$ with a $\Pi_2$ Scott sentence but no computable $\Pi_2$ Scott sentence, but with a computable $\Pi_2$ sentence $\varphi$ such that, for all hyperarithmetic $\mc{B}$,
    \[ \mc{B} \models \varphi \;\Longleftrightarrow\; \mc{A} \cong \mc{B}.\]
\end{corollary}
\begin{proof}
    Consider the index set of computable structures $\mc{A}$ with a $\Pi_2$ Scott sentence and such that there is a computable $\Pi^0_2$ sentence $\varphi$ such that for all hyperarithmetic structures $\mc{B}$, $\mc{B} \models \varphi$ if and only if $\mc{A} \equiv_2 \mc{B}$ (so that $\mc{A} \cong \mc{B}$). Since we are quantifying universally over hyperarthimetic structures, this index set is $\Sigma^1_1$. Moreover, it is a superset of the $\Pi^1_1$-$m$-complete set of computable structures with a computable $\Pi_2$ sentence. Thus it must be a proper superset, proving the corollary.
\end{proof}

\subsection{Working in $\Mod(\mc{L})$}

Sometimes by working in $Mod(\mc{L})$ rather than with index sets we get a stronger theorem. In this case, it depends on how we relativize the statement and whether we work with boldface or lightface Borel classes.

\begin{proposition}
    The set of structures with a computable $\Pi_2$ Scott sentence is a Borel---in fact (boldface) $\mathbf{\Sigma}^0_3$---set in $Mod(\mc{L})$. It is (lightface) $\Pi^1_1$ but, in a sufficiently rich language, not (lightface) $\Sigma^1_1$.
\end{proposition}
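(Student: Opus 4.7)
The approach uses the effective list $(\theta_e)_{e \in \omega}$ of all computable $\Pi_2$ sentences, together with the sets $S_e = \{\mc{A} \in \Mod(\mc{L}) : \theta_e \text{ is a Scott sentence for } \mc{A}\}$, so that the set in question is $\bigcup_e S_e$. The key observation driving both upper bounds is that each $S_e$ is either empty (if $\theta_e$ has two non-isomorphic countable models) or equals the entire class of countable models of $\theta_e$.

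For the boldface $\mathbf{\Sigma}^0_3$ bound, when $S_e$ is nonempty it coincides with the $\mathbf{\Pi}^0_2$ model class of the $\Pi_2$ sentence $\theta_e$. Hence $\bigcup_e S_e$ is a countable union of $\mathbf{\Pi}^0_2$ sets, which is $\mathbf{\Sigma}^0_3$.

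For the lightface $\Pi^1_1$ bound, I would expand membership in the set as
\[ \exists e \, \bigl[\, \mc{A} \models \theta_e \;\wedge\; \forall \mc{B} \in \Mod(\mc{L}) \, (\mc{B} \models \theta_e \Rightarrow \mc{B} \cong \mc{A}) \,\bigr]. \]
The clause $\mc{A} \models \theta_e$ is $\Pi^0_2$; inside the universal quantifier, $\mc{B} \models \theta_e$ is $\Pi^0_2$ while $\mc{B} \cong \mc{A}$ is $\Sigma^1_1$, so the implication is $\Sigma^1_1$, and after $\forall \mc{B}$ the matrix is $\Pi^1_1$, uniformly in $e$. Because $\Pi^1_1$ is closed under number quantifiers (dually to closure of $\Sigma^1_1$ under $\forall n$), the outer $\exists e$ preserves $\Pi^1_1$.

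For the non-$\Sigma^1_1$ claim in a sufficiently rich language, I would argue by contradiction. The proof of Theorem \ref{thm:complete} constructs, uniformly in a computable tree $T$, a \emph{total} computable structure $\mc{A}_T \in \Mod(\mc{L})$, built as a bouquet graph, with $\mc{A}_T \in S$ iff $T$ is well-founded. Composing with an indexing $n \mapsto T_n$ of the computable trees yields a computable map $n \mapsto \mc{A}_{T_n} \in \Mod(\mc{L})$ whose preimage of $S$ is the $\Pi^1_1$-$m$-complete set $\{n : T_n \text{ is well-founded}\}$. A putative lightface $\Sigma^1_1$ definition of $S$ would pull back through this computable map to a lightface $\Sigma^1_1$ definition of that preimage, contradicting lightface $\Sigma^1_1 \ne \Pi^1_1$. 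The only technical point to check is that $\mc{A}_T$ is genuinely a total computable structure in $\Mod(\mc{L})$ uniformly in $T$, which is immediate from the bouquet-graph presentation used in the proof of Theorem \ref{thm:complete}.
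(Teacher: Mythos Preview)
Your proof is correct and follows essentially the same approach as the paper. The only organizational differences are minor: for the $\mathbf{\Sigma}^0_3$ bound the paper first passes through a boldface $\mathbf{\Sigma}^1_1$ description (listing the countably many isomorphism types admitting a computable $\Pi_2$ Scott sentence) before sharpening to $\mathbf{\Sigma}^0_3$, whereas you go there directly via the model classes of the $\theta_e$; and for the failure of lightface $\Sigma^1_1$ the paper phrases the pullback in terms of the index set of computable structures rather than the tree-to-structure map from Theorem~\ref{thm:complete}, but the content is the same.
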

\begin{proof}
    A similar argument as above (in the first paragraph of Theorem \ref{thm:complete}) shows that it is (lightface) $\Pi^1_1$. It is also (boldface) $\mathbf{\Sigma}^1_1$: listing out all of the countably many structures with a computable $\Pi_2$ Scott sentence, together with their Scott sentences, a structure has a computable $\Pi_2$ Scott sentence if and only if it is isomorphic to one of these. This is (boldface) $\mathbf{\Sigma}^0_3$: given $\mc{A}$, we ask whether there exists a structure in this list such that $\mc{A}$ satisfies the corresponding $\Pi^0_2$ Scott sentence. If this set was (lightface) $\Sigma^1_1$, then the index set of computable such structures would be $\Sigma^1_1$; but we know that this index set is $\Pi^1_1$-$m$-complete.
\end{proof}

\begin{corollary}
    The set of structures $\mc{A}$ with an $\mc{A}$-computable $\Pi_2$ Scott sentence is $\Pi^1_1$-Wadge-complete set in $Mod(\mc{L})$.
\end{corollary}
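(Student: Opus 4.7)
The plan is to relativize the proof of Theorem \ref{thm:complete}, turning the construction $T \mapsto \mc{A}_T$ into a continuous reduction from the $\Pi^1_1$-Wadge-complete set $\mathrm{WF}$ of well-founded trees on $\omega$ into the set $S$ of structures with an $\mc{A}$-computable $\Pi_2$ Scott sentence. First I would verify that $S$ is $\Pi^1_1$: $\mc{A} \in S$ iff there exists $e \in \omega$ such that $\Phi_e^{\mc{A}}$ outputs (an index for) a $\Pi_2$ sentence $\varphi$ with $\mc{A} \models \varphi$ and, for every $\mc{B} \in \Mod(\mc{L})$, $\mc{B} \models \varphi \Rightarrow \mc{B} \cong \mc{A}$. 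The universal quantifier over $\mc{B}$ makes this $\Pi^1_1$, and an arithmetic number quantifier on the outside preserves $\Pi^1_1$.

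For Wadge-hardness, I would run the construction from the proof of Theorem \ref{thm:complete} relativized so that the listing $(\theta_e)_{e\in\omega}$ enumerates all $T$-computable $\Pi_2$ sentences (uniformly in $T$), using an oracle enumeration of partial $T$-computable indices. Because the construction of $\mc{A}_{T,s}$ only queries finitely much of $T$ at each stage, the induced map $T \mapsto \mc{A}_T$ from $\mathrm{Tr}$ to $\Mod(\mc{L})$ is continuous. It now suffices to check both directions of the reduction.

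For the forward direction, if $T \in \mathrm{WF}$, then relativizing Lemma \ref{lem:wf} gives a $\Pi_2$ Scott sentence for $\mc{A}_T$ assembled uniformly from $T_\infty$ and the set of non-leaves of $T_\infty$. The point is that both of these sets are c.e.\ in the atomic diagram of $\mc{A}_T$ itself: $\sigma \in T_\infty$ iff some element eventually bears the label $\ell_\sigma$, and $\sigma$ is a non-leaf of $T_\infty$ iff some element eventually bears the label $\ell^\dagger_\sigma$. Rewriting clause (5) of Lemma \ref{lem:wf} so that ``child of $\sigma$ on $T$'' is replaced by ``child of $\sigma$ on $T_\infty$'' (these coincide for non-leaves of $T_\infty$, by the construction) shows that the Scott sentence is $\mc{A}_T$-computable $\Pi_2$, so $\mc{A}_T \in S$. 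For the backward direction, if $T \notin \mathrm{WF}$, note that $\mc{A}_T \leq_T T$, so every $\mc{A}_T$-computable $\Pi_2$ sentence is $T$-computable and therefore appears in the relativized listing as some $\theta_e$. The relativized arguments of Lemmas \ref{lem:infexp} and \ref{lem:bsat} then produce, for any $\theta_e$ satisfied by $\mc{A}_T$, a structure $\mc{B}_e \ncong \mc{A}_T$ with $\mc{B}_e \models \theta_e$, so $\mc{A}_T \notin S$.

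The main obstacle is the forward direction, where one must be careful that the Scott sentence obtained in Lemma \ref{lem:wf} is $\mc{A}_T$-computable rather than merely $T$-computable. The relativized argument does not immediately yield this because the sentence as written refers to $T$ explicitly; the observation that the relevant data is c.e.\ from the atomic diagram of $\mc{A}_T$ is exactly what repairs this. With that in place, $T \mapsto \mc{A}_T$ is a continuous reduction from $\mathrm{WF}$ to $S$, so $S$ is $\Pi^1_1$-Wadge-hard, and combined with the upper bound it is $\Pi^1_1$-Wadge-complete.
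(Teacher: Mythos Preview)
Your proposal is correct and follows the same approach as the paper, which simply says to relativize the proof of Theorem~\ref{thm:complete} to the tree $T$. Your extra care in the forward direction---observing that $T_\infty$ and its non-leaves are $\Sigma_1$ in the atomic diagram of $\mc{A}_T$ and that clause~(5) can be rewritten over $T_\infty$ rather than $T$---addresses a genuine point the paper leaves implicit, namely that the Scott sentence produced by Lemma~\ref{lem:wf} is $\mc{A}_T$-computable and not merely $T$-computable.
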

\begin{proof}
    The proof is the same as that of Theorem \ref{thm:complete}, working relative to the tree $T$.
\end{proof}

\subsection{Jump inversion}

We can use the method of jump inversion or Marker extensions to generalize our results from $\Pi_2$ sentences to $\Pi_n$ sentences for arbitrary $n$. We get the following corollaries of our main theorem:

\begin{corollary}\label{cor:nopi3}
    For each computable ordinal $\alpha$ there is a computable structure with a $\Pi_{\alpha+2}$ Scott sentence but with no computable $\Sigma_{\alpha +4}$ Scott sentence.
\end{corollary}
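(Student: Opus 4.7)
The plan is to induct on $\alpha$ with base case $\alpha = 0$ supplied directly by \Cref{thm:pi3}, using the Marker extension / jump inversion technique to carry out the inductive step. Recall that given a computable relational structure $\mc{B}$, one can uniformly produce a computable Marker extension $\mc{M}(\mc{B})$ with the well-known property that it shifts Scott complexity up by exactly one level, both classically and effectively: $\mc{M}(\mc{B})$ has a (computable) $\Pi_{\beta+1}$ Scott sentence precisely when $\mc{B}$ has a (computable) $\Pi_\beta$ Scott sentence, and analogously for $\Sigma$ in place of $\Pi$.

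For the successor step, given a computable $\mc{A}_\alpha$ with a $\Pi_{\alpha+2}$ Scott sentence but no computable $\Sigma_{\alpha+4}$ Scott sentence, I would set $\mc{A}_{\alpha+1} := \mc{M}(\mc{A}_\alpha)$. This structure is computable and has a $\Pi_{\alpha+3}$ Scott sentence. If it admitted a computable $\Sigma_{\alpha+5}$ Scott sentence, then pulling back along the Marker extension equivalence would yield a computable $\Sigma_{\alpha+4}$ Scott sentence for $\mc{A}_\alpha$, contradicting the inductive hypothesis.

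For limit stages $\lambda$, I would invoke the transfinite Marker extension machinery of Ash and Knight (see Chapter VII of \cite{AshKnight00}), which provides, for each computable ordinal $\lambda$, a functor $\mc{M}_\lambda$ shifting both classical and effective Scott complexity up by exactly $\lambda$ levels in a uniformly computable way. Applying $\mc{M}_\alpha$ directly to the base structure of \Cref{thm:pi3} then produces the required $\mc{A}_\alpha$ in a single step, avoiding any issue with taking transfinite limits of the iterated construction.

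The main obstacle I anticipate is in the limit case: carefully verifying that the transfinite Marker extension shifts \emph{effective} Scott complexity (and not just its classical counterpart) by exactly $\lambda$ levels. The classical half is well-documented, but the effective half relies on the pair-of-structures theorem and on the uniformity of witnesses throughout the construction. At successor stages the verification reduces to standard properties of the ordinary Marker extension, and the inductive bookkeeping is essentially trivial.
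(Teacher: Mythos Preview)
Your proposal is correct and ultimately lands on the same idea the paper uses, but the inductive framing is an unnecessary detour. The paper does not induct at all: it takes the base structure $\mc{A}$ from \Cref{thm:pi3} and applies the $\alpha$-fold jump inversion $\Phi_\alpha$ of Goncharov--Harizanov--Knight--McCoy--Miller--Solomon (as presented in Chapter~X.3 of \cite{MBook}) in one shot, for every computable $\alpha$ simultaneously. The key black box is not a ``transfinite Marker extension shifts complexity by $\lambda$'' statement verified separately for the effective and classical cases, but rather the single fact that $\mc{A}$ is \emph{effectively bi-interpretable} with the $\alpha$-canonical structural jump of $\Phi_\alpha(\mc{A})$. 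Effective bi-interpretation preserves (computable) $\Pi_\beta$ and $\Sigma_\beta$ Scott sentences on the nose, and the $\alpha$-jump shifts them by $\alpha$; chaining these two observations handles both the classical and effective halves at once and dissolves the obstacle you flagged. Your successor step via a single Marker extension $\mc{M}$ is fine but redundant once you have $\Phi_\alpha$, and your limit-stage paragraph already concedes this by abandoning the induction and applying $\mc{M}_\alpha$ directly to the base structure---which is exactly the paper's argument, modulo the choice of reference (the paper cites \cite{GHKMMS} and \cite{MBook} rather than \cite{AshKnight00}).
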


\begin{corollary}\label{cor:complete}
    With $(\mc{A}_i)_{i \in \omega}$ an effective list of (possibly partial) structures in a sufficiently rich language, for each computable ordinal $\alpha$, the set
    \[ \{i \mid \text{$\mc{A}_i$ has a computable $\Pi_{\alpha+2}$ Scott sentence}\}\]
    is $\Pi^1_1$-$m$-complete.
\end{corollary}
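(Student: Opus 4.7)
The plan is to derive Corollary \ref{cor:complete} from Theorem \ref{thm:complete} by applying $\alpha$ iterations of Marker extensions (in the spirit of Ash and Knight's jump-inversion technique for structures). Starting from the uniformly computable assignment $T \mapsto \mc{A}_T$ produced in the proof of Theorem \ref{thm:complete}, the goal is to construct, uniformly in indices for $\alpha$ and $T$, a computable structure $\mc{A}_T^\alpha$ such that:
\begin{enumerate}
    \item $\mc{A}_T^\alpha$ always has a $\Pi_{\alpha+2}$ Scott sentence;
    \item $\mc{A}_T^\alpha$ has a computable $\Pi_{\alpha+2}$ Scott sentence if and only if $\mc{A}_T$ has a computable $\Pi_2$ Scott sentence, equivalently, if and only if $T$ is well-founded.
\end{enumerate}
Since $\{T : T \text{ is well-founded}\}$ is $\Pi^1_1$-$m$-complete and $T \mapsto \mc{A}_T^\alpha$ will be computable, this gives the required $m$-reduction to our index set. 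The $\Pi^1_1$ upper bound is inherited from the same reasoning as in the $\Pi_2$ case already indicated before Theorem \ref{thm:complete}.

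The construction proceeds by transfinite recursion. At a successor step $\alpha \to \alpha+1$, we apply the standard Marker extension operation $M$, which replaces each relation by a suitable arrangement of auxiliary unary sorts and relations so that: (a) isomorphism types of $M(\mc{B})$ and $\mc{B}$ determine each other computably; (b) automorphism orbits definable by $\Sigma_{\alpha+1}$ formulas in $\mc{B}$ correspond to orbits definable by $\Sigma_{\alpha+2}$ formulas in $M(\mc{B})$, with the correspondence effective in both directions on computable infinitary formulas; and (c) the operation is computable uniformly in (a presentation of) $\mc{B}$. The key point is that $M$ shifts the Scott complexity up by exactly one level, and, crucially for our purposes, this shift is effective: $M(\mc{B})$ has a computable $\Pi_{\alpha+2}$ Scott sentence iff $\mc{B}$ has a computable $\Pi_{\alpha+1}$ Scott sentence. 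For limit ordinals $\lambda$, we use the standard disjoint-sum construction indexed by a fundamental sequence for $\lambda$ from Kleene's $\mathcal{O}$, combining Marker extensions for cofinally many $\alpha < \lambda$ so that the resulting structure has Scott complexity exactly $\lambda+1$ levels above the base.

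The main obstacle lies in two places. First, one must verify that the effectiveness of the Marker extension's complexity shift transfers to the dichotomy we need: if $\mc{A}_T^\alpha$ had a computable $\Pi_{\alpha+2}$ Scott sentence, one can effectively ``unwind'' it, step by step through the Marker construction, to produce a computable $\Pi_2$ Scott sentence for $\mc{A}_T$, contradicting Theorem \ref{thm:complete}. Conversely, if $T$ is well-founded and $\mc{A}_T$ has a computable $\Pi_2$ Scott sentence (by Lemma \ref{lem:wf}), the extension operation produces a computable $\Pi_{\alpha+2}$ Scott sentence for $\mc{A}_T^\alpha$. Second, handling limit ordinals requires care so that both the existence of the non-computable $\Pi_{\alpha+2}$ Scott sentence (corresponding to item (1)) and the effectiveness of the above unwinding are preserved; this is the usual subtlety in limit stages of jump inversion, handled by the uniformity of Marker extension complexity bounds as recorded in Chapter 6 of \cite{AshKnight00}. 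Once these technical points are verified, the map $T \mapsto \mc{A}_T^\alpha$ gives the desired $m$-reduction, establishing $\Pi^1_1$-$m$-completeness.
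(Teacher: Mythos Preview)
Your proposal is correct and takes essentially the same approach as the paper: apply iterated Marker extensions / jump inversion to the structures $\mc{A}_T$ from Theorem~\ref{thm:complete} so that having a computable $\Pi_2$ Scott sentence is transformed into having a computable $\Pi_{\alpha+2}$ Scott sentence. The only difference is packaging: the paper invokes the single operator $\Phi_\alpha$ of \cite{GHKMMS} (as in Chapter~X.3 of \cite{MBook}) together with the proposition that $\mc{A}$ has a computable $\Pi_\beta$ Scott sentence iff $\Phi_\alpha(\mc{A})$ has a computable $\Pi_{\alpha+\beta}$ Scott sentence, rather than spelling out the transfinite recursion and the limit-stage construction by hand.
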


Note that Marker extensions are only additive, so that we do not answer whether every computable structure with a $\Pi_n$ Scott sentence has a computable $\Sigma_{2n}$ Scott sentence for $n \geq 3$.

The particular flavour of jump inversion that we use is due to Goncharov, Harizanov, Knight, McCoy, R. Miller, and Solomon \cite{GHKMMS} and also given in Chapter X.3 of \cite{MBook}. In Chapter X.3 of \cite{MBook} Montalb\'an shows that given a computable ordinal $\alpha$ and a structure $\mc{A}$ there is a structure $\Phi_\alpha(\mc{A})$ such that $\mc{A}$ is effectively bi-interpretable with the $\alpha$-canonical structural jump of the image. There is a uniform effective construction from $\mc{A}$ of $\Phi_\alpha(\mc{A})$ as discussed in \cite{ChenGonzalezHT}. We leave to \cite{MBook} the definitions and properties of effective bi-interpretations and jumps of structures (see also \cite{MonJump}).

If two structures are effectively bi-interpretable, then one has a $\Pi_\beta$ Scott sentence if and only if the other does. Thus $\mc{A}$ has a $\Pi_\beta$ Scott sentence if and only if the $\alpha$-canonical structural jump of $\Phi_\alpha(\mc{A})$ does. And the $\alpha$-canonical structural jump of a structure has a $\Pi_\beta$ Scott sentence if and only if the original structure has a $\Pi_{\alpha+\beta}$ Scott sentence. The same is true for computable Scott sentences. Thus:

\begin{proposition}
Let $\mc{A}$ be a computable structure and let $\alpha$ and $\beta$ be computable ordinals.
\begin{itemize}
    \item $\mc{A}$ has a $\Pi_\beta$ Scott sentence if and only if $\Phi_\alpha(\mc{A})$ has a $\Pi_{\alpha+\beta}$ Scott sentence.
    \item $\mc{A}$ has a computable $\Pi_\beta$ Scott sentence if and only if $\Phi_\alpha(\mc{A})$ has a computable $\Pi_{\alpha+\beta}$ Scott sentence.
\end{itemize}
The same is true for $\Sigma$ Scott sentences.
\end{proposition}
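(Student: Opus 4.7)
The plan is to assemble the proposition from two standard facts about the $\alpha$-canonical structural jump and effective bi-interpretability, both of which are developed in Chapter X.3 of \cite{MBook}. Write $\mc{B} := \Phi_\alpha(\mc{A})$ and let $\mc{B}^{(\alpha)}$ denote the $\alpha$-canonical structural jump of $\mc{B}$. By the defining property of the construction $\Phi_\alpha$, the structure $\mc{A}$ is effectively bi-interpretable with $\mc{B}^{(\alpha)}$, and this effective bi-interpretation is obtained uniformly from $\mc{A}$.

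The first ingredient I will invoke is that effective bi-interpretability is a very tight equivalence: if two structures $\mc{C}_1$ and $\mc{C}_2$ are effectively bi-interpretable, then for every computable ordinal $\gamma$, $\mc{C}_1$ has a (computable) $\Pi_\gamma$ Scott sentence if and only if $\mc{C}_2$ does, and similarly for $\Sigma_\gamma$ Scott sentences. This is because the interpreting formulas translate an $\mc{L}_{\omega_1\omega}$ formula of a given complexity on one side to one of the same complexity on the other side, and the translation is computable when the bi-interpretation is effective (cf.\ Chapter VI of \cite{MBook}). Applying this to $\mc{A}$ and $\mc{B}^{(\alpha)}$, I reduce the problem to showing:
\begin{equation*}
\mc{B}^{(\alpha)} \text{ has a (computable) } \Pi_\beta \text{ Scott sentence} \iff \mc{B} \text{ has a (computable) } \Pi_{\alpha+\beta} \text{ Scott sentence,}
\end{equation*}
and analogously for $\Sigma_\beta$.

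The second ingredient is precisely this shift-by-$\alpha$ property of the canonical structural jump, which is one of its defining features. For a structure $\mc{B}$ and computable ordinal $\alpha$, the $\alpha$-canonical structural jump $\mc{B}^{(\alpha)}$ is designed so that the back-and-forth relations $\leq_\beta$ on $\mc{B}^{(\alpha)}$ correspond to the relations $\leq_{\alpha+\beta}$ on $\mc{B}$; equivalently, for every $\gamma$ a $\Pi_\gamma$ formula of $\mc{B}^{(\alpha)}$ corresponds effectively to a $\Pi_{\alpha+\gamma}$ formula of $\mc{B}$ and vice versa, and the translation preserves computability of indices. From this, the displayed equivalence above (both in the plain and computable versions) follows by translating a Scott sentence through the correspondence. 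The $\Sigma$ version is obtained the same way, or alternatively by applying the $\mc{A}\cdot\omega$ trick from Section~2 to pass between $\Sigma_{\beta+1}$ and $\Pi_\beta$ Scott sentences.

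The only subtle point, and hence the main step I would verify carefully, is the uniformity of the correspondence in the \emph{computable} case: one must check that an index for a computable $\Pi_\beta$ Scott sentence of $\mc{B}^{(\alpha)}$ can be effectively converted to an index for a computable $\Pi_{\alpha+\beta}$ Scott sentence of $\mc{B}$, and conversely. This is standard but not automatic; it boils down to the fact that the definition of $\mc{B}^{(\alpha)}$ is arithmetical in $\mc{B}$ in a uniform way and that the formula translation through the bi-interpretation with $\mc{A}$ is computable. Once these uniformities are in hand, chaining the two equivalences yields the proposition, and both bullets (as well as the $\Sigma$ analogue) drop out simultaneously.
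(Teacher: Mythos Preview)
Your proposal is correct and follows essentially the same two-step argument the paper gives (in the paragraph preceding the proposition rather than in a formal proof environment): first use that effective bi-interpretability preserves having a (computable) $\Pi_\beta$ Scott sentence, then use the shift-by-$\alpha$ property of the $\alpha$-canonical structural jump. Your additional remarks about verifying uniformity in the computable case and the alternative route to the $\Sigma$ version via the $\mc{A}\cdot\omega$ trick are reasonable elaborations, but the core strategy matches the paper's.
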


From this we can prove the corollaries above.

\begin{proof}[Proof of Corollary \ref{cor:nopi3}]
    Let $\mc{A}$ be a computable structure with a $\Pi_2$ Scott sentence but no computable $\Sigma_4$ Scott sentence. Then $\Phi_{\alpha}(\mc{A})$ is a computable structure with a $\Pi_{\alpha + 2}$ Scott sentence but no computable $\Sigma_{\alpha+4}$ Scott sentence.
\end{proof}

\begin{proof}[Proof of Corollary \ref{cor:complete}]
    Given a $\Pi^1_1$ set $X$, we can construct for $i \in X$ a computable structure $\mc{A}_i$ such that $i \in X$ if and only if $\mc{A}_i$ has a computable $\Pi_2$ Scott sentence. Then $i \in X$ if and only if $\Phi_{\alpha}(\mc{A}_i)$ has a computable $\Pi_{\alpha+2}$ Scott sentence.
\end{proof}

\bibliography{References}
\bibliographystyle{alpha} 

\end{document}